\newtheorem{theorem}{Theorem}[section]
\newtheorem{definition}{Definition}[section]
\newtheorem{proposition}{Proposition}[section]
\newtheorem{lemma}{Lemma}[section]
\newtheorem{example}{Example}[section]
\newcommand{\ie}{\textit{i}.\textit{e}.}
\newcommand{\citep}[1]{\cite{#1}}
\title{A general approach to transforming finite elements}
\date{}
\author{Robert C. Kirby\thanks{Department of Mathematics, Baylor
    University; One Bear Place \#97328; Waco, TX 76798-7328.
    Email: robert\_kirby@baylor.edu.  This
    work was supported by NSF grant 1525697.}}
\begin{document}
\maketitle

\begin{abstract}
  The use of a \emph{reference element} on which a finite element
  basis is constructed once and mapped to each cell in a mesh greatly
  expedites the structure and efficiency of finite element codes.
  However, many famous finite elements such as Hermite, Morley,
  Argyris, and Bell, do not possess the kind of
  equivalence needed to work with a reference element in the standard
  way.  This paper gives a generalizated approach to mapping bases for
  such finite elements by means of studying relationships between the
  finite element nodes under push-forward.  {\bf MSC 2010}: 65N30.
  \emph{Keywords}: Finite element method, basis function, pull-back. 
\end{abstract}

\section{Introduction}
At the heart of any finite element implementation lies the evaluation
of basis functions and their derivatives on each cell in a mesh.
These values are used to compute local integral
contributions to stiffness matrices and load vectors, which are
assembled into a sparse matrix and then passed on to an
algebraic solver.  While it is fairly
easy to parametrize local integration routines over basis
functions, one must also provide an implementation of those basis
functions.  Frequently, finite element codes use a
\emph{reference element}, on which a set of basis functions is constructed
once and mapped via coordinate change to each cell in
a mesh.  Alternately, many finite element bases can be expressed in
terms of barycentric coordinates, in which case one must simply
convert between the physical and barycentric coordinates on each cell
in order evaluate basis functions.  Although we refer the reader to
recent results on \emph{Bernstein
  polynomials}~\citep{ainsworth2011bernstein, kirby2011fast} for
interesting algorithms in the latter case, the prevelance of the
reference element paradigm in modern high-level finite element
software~\citep{bangerth_deal_2007,intrepid,LoggMardalEtAl2012a,long2010unified,prud2012feel,rathgeber2016firedrake} we
shall restrict ourselves to the former.

The development of FIAT~\citep{Kir04} has had a significant impact on
finite element software, especially through its adoption in high-level
software projects such as FEniCS~\citep{LoggMardalEtAl2012a} and
Firedrake~\citep{rathgeber2016firedrake}. 
FIAT provides tools to describe and construct reference bases
for arbitrary-order instances of many common and unusual finite
elements.  Composed with a domain-specific language for variational
problems like UFL~\citep{AlnaesEtAl2012} and a form compiler mapping UFL into
efficient code for element integrals~\citep{tsfc,KirbyLogg2006a,luporini2014coffee} gives a powerful, user-friendly tool chain. 

However, any code based on the reference element paradigm operates
under the assumption that finite elements satisfy a certain
kind of \emph{equivalence}.  Essentially, one must have a
pull-back operation that puts basis functions on each cell
into one-to-one correspondence with the reference basis functions.
Hence, the original form of 
\texttt{ffc}~\citep{KirbyLogg2006a} used only (arbitrary order) Lagrange finite
elements, although this was generalized to $H(\mathrm{div})$ and
$H(\mathrm{curl})$ elements using Piola transforms
in~\citep{RognesKirbyEtAl2009a}. Current technology
captures the full simplicial discrete de Rham complex and
certain other elements, but many famous elements are not included.
Although it is possible to construct reference elements in FIAT or some
other way, current form compilers or other high-level libraries do
not provide correct code for mapping them.

\begin{figure}
  \begin{subfigure}[t]{0.2\textwidth}
    \begin{center}
    \begin{tikzpicture}[scale=2.0] 
    \draw[fill=yellow] (0,0) -- (1, 0) -- (0, 1) -- cycle;
    \foreach \i in {0, 1, 2, 3} {
      \foreach \j in {0,...,\i}{
      \draw[fill=black] (1-\i/3, \j/3) circle (0.02);
      }
      }
    \end{tikzpicture}
    \end{center}
  \caption{Cubic Lagrange}
  \label{lag3}
  \end{subfigure}
  \hfill
  \begin{subfigure}[t]{0.18\textwidth}
    \begin{center}
  \begin{tikzpicture}[scale=2.0] 
    \draw[fill=cyan] (0,0) -- (1, 0) -- (0, 1) -- cycle;
    \foreach \i/\j in {0/0, 1/0, 0/1}{
      \draw[fill=black] (\i, \j) circle (0.02);
      \draw (\i, \j) circle (0.05);
    }
    \draw[fill=black] (1/3, 1/3) circle (0.02);
  \end{tikzpicture}
  \end{center}
  \caption{Cubic Hermite}
  \label{herm3}
  \end{subfigure}
  \hfill
  \begin{subfigure}[t]{0.18\textwidth} 
    \begin{center}
    \begin{tikzpicture}[scale=2.0]
    \draw[fill=green] (0,0) -- (1, 0) -- (0, 1) -- cycle;
    \foreach \i/\j in {0/0, 1/0, 0/1}{
      \draw[fill=black] (\i, \j) circle (0.02);
    }
    \foreach \i/\j/\n/\t in {0.5/0.0/0.0/-1, 0.5/0.5/0.707/0.707, 0.0/0.5/-1/0}{
      \draw[->] (\i, \j) -- (\i+\n/10, \j+\t/10);
    }
    \end{tikzpicture}
    \end{center}
      \caption{Morley}
  \label{morley}
    \end{subfigure}
  \begin{subfigure}[t]{0.18\textwidth}
    \begin{center}
  \begin{tikzpicture}[scale=2.0] 
    \draw[fill=gray] (0,0) -- (1, 0) -- (0, 1) -- cycle;
    \foreach \i/\j in {0/0, 1/0, 0/1}{
      \draw[fill=black] (\i, \j) circle (0.02);
      \draw (\i, \j) circle (0.05);
      \draw (\i, \j) circle (0.08);
    }
    \foreach \i/\j/\n/\t in {0.5/0.0/0.0/-1, 0.5/0.5/0.707/0.707, 0.0/0.5/-1/0}{
      \draw[->] (\i, \j) -- (\i+\n/10, \j+\t/10);
      }
  \end{tikzpicture}
  \end{center}
  \caption{Quintic Argyris}
  \label{arg}
  \end{subfigure}
  \hfill
  \begin{subfigure}[t]{0.18\textwidth}
    \begin{center}
  \begin{tikzpicture}[scale=2.0] 
    \draw[fill=pink] (0,0) -- (1, 0) -- (0, 1) -- cycle;
    \foreach \i/\j in {0/0, 1/0, 0/1}{
      \draw[fill=black] (\i, \j) circle (0.02);
      \draw (\i, \j) circle (0.05);
      \draw (\i, \j) circle (0.08);
    }
  \end{tikzpicture}
  \end{center}
  \caption{Bell}
  \label{bell}
  \end{subfigure}
  \caption{Some famous triangular elements.  Solid dots represent
    point value degrees of freedom, smaller circles represent
    gradients, and larger circles represent the collection of second
    derivatives.  The arrows indicate directional derivatives
    evaluated at the tail of the arrow.}
    \label{fig:els}
\end{figure}
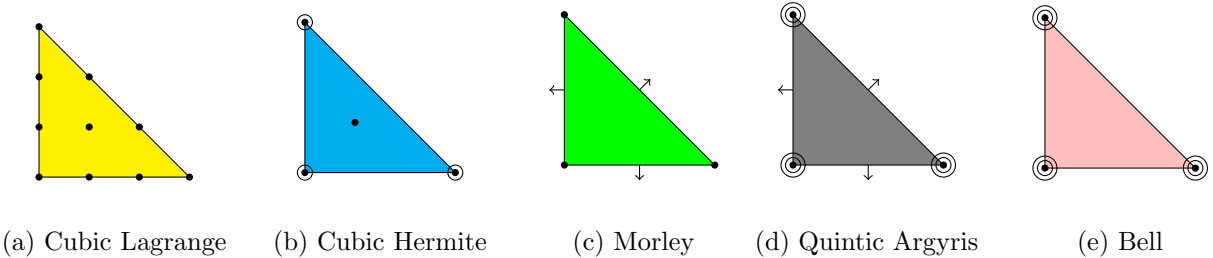

Elements such as Hermite~\citep{ciarlet1972general},
Argyris~\citep{argyris1968tuba}, Morley~\citep{morley1971constant},
and Bell~\citep{bell1969refined}, shown alongside the Lagrange element in Figure~\ref{fig:els}, do not satisfy the proper
equivalence properties to give a simple relationship between the
reference basis and nodal basis on a general cell.  Typically,
implementations of such elements require special-purpose code for
constructing the basis functions separately on each element, which can
cost nearly as much in terms of work and storage as building the element
stiffness matrix itself.  It also requires a different internal
workflow in the code.  Although Dom\'\i nguez and
Sayas~\citep{dominguez2008algorithm} give a technique for mapping 
bases for the Argyris element and a separate computer
implementation is available
(\url{https://github.com/VT-ICAM/ArgyrisPack}) and Jardin~\cite{jardin2004triangular} gives a per-element construction technique for the Bell element, these represents the exception rather than the rule.  The literature contains no general
approach for constructing and mapping finite element bases in the
absence of affine equivalence or a suitable generalization thereof.

In this paper we provide such a general theory for transforming
finite elements that supplements the theory on which FIAT is based for
constructing those elements.  Our focus is on the case of
scalar-valued elements in affine spaces, although we indicate how
the techniques generalize on both counts.  We begin the rest
of the paper by recalling
definitions in \S~\ref{sec:prelim}.  The bulk of the
paper occurs in \S~\ref{sec:theory}, where we show how to map finite
element bases under affine equivalence, affine-interpolation
equivalence, and when neither holds.  We also sketch briefly how the
theory is adapted to the case of more general pullbacks such as
non-affine coordinate mappings or Piola transforms.  All the theory in
\S~\ref{sec:theory} assumes that the natural
pull-back operation (\ie~composition with coordinate change)
exactly preserves the function spaces between reference and physical
space.   However, in certain notable cases such as the Bell element,
this condition fails to hold.  In \S~\ref{sec:whatthebell}, we
give a more general theory with application to the Bell element.
Finally, in \S~\ref{sec:num}, we present some
numerical results using these elements.

\section{Definitions and preliminaries}
\label{sec:prelim}

Througout, we let $C_b^k(\Omega)$ denote the space of functions with
continuous and bounded derivatives up to and including order $k$ over
$\Omega$, and $C_b^k(\Omega)^\prime$ its topological dual.

\begin{definition}
  A \emph{finite element} is a triple $(K, P, N)$ such that
  \begin{itemize}
  \item $K \subset \mathbb{R}^d$ is a bounded domain.
  \item $P \subset C^k_b(K)$ for some integer $k \geq 0$ is a
    finite-dimensional function space.
  \item \(N = \{n_i\}_{i=1}^{\nu} \subset
    C^k_b(K)^\prime\) is a collection of
    linearly independent functionals whose actions restricted to $P$
    form a basis for $P^\prime$.
  \end{itemize}
\end{definition}

The nodes in $N$ are taken as objects in the full
infinite-dimensional dual, although sometimes we will only require
their restrictions to members of $P$.
For any \( n \in C^k_b(K)^\prime \),
define $\pi n \in P^\prime$ by restriction.  That is, define \( \pi n
(p) = n(p) \) for any  \( p \in P \). 

Further, with a slight abuse in notation, we
will let $N = \begin{bmatrix} n_1 & n_2 & \dots & n_\nu \end{bmatrix}^T$
denote a functional on $P^\nu$, or equivalently, a vector of $\nu$
members of the dual space.  

As shorthand, we define these spaces consisting of vectors of
functions or functionals by
\begin{equation}
  \label{eq:XXdag}
  \begin{split}
    X & \equiv \left( P \right)^\nu, \\
    X^\dagger & \equiv \left( C_b^k( K )^\prime \right)^\nu.
  \end{split}
\end{equation}

We can ``vectorize'' the restriction operator \( \pi \), so that 
for any 
\( N \in X^\dagger \), \( \pi N \in (P^\nu)^\prime \) has \( (\pi N)_i
= \pi(n_i) \).

Galerkin methods work in terms of a basis for the approximating space,
and these are typically built out of local bases for each element:
\begin{definition}
  Let \((K, P, N)\) be a finite element with $\dim P = \nu$.  The \emph{nodal basis} for $P$
  is the set \(\{\psi_i\}_{i=1}^{\nu}\) such that \(n_i(\psi_j) =
  \delta_{i,j}\) for each \(1 \leq i,j \leq \nu\).
\end{definition}

The nodal basis also can be written as \( X \ni \Psi = \begin{bmatrix} \psi_1 & \psi_2 & \dots &  \psi_\nu \end{bmatrix} \).

Traditionally, finite element codes construct the nodal basis for a
\emph{reference} finite element
\(\left(\hat{K}, \hat{P},
\hat{N}\right)\) and then map it into the basis for \( \left(K, P,
N\right) \) for each $K$ in the mesh.  Let $F:K \rightarrow \hat{K}$ be
the geometric mapping, as in Figure~\ref{fig:affmap}.  We let \( J \)
denote the Jacobian matrix of this transformation.

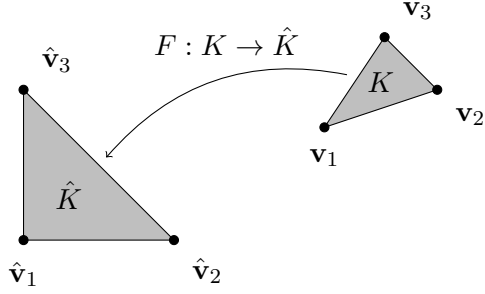
\begin{figure}
  \begin{center}
  \begin{tikzpicture}
    \draw[fill=lightgray] (0,0) coordinate (vhat1)
    -- (2,0) coordinate(vhat2)
    -- (0,2) coordinate(vhat3)--cycle;
    \foreach \pt\labpos\lab in {vhat1/below/\hat{\mathbf{v}}_1, vhat2/below right/\hat{\mathbf{v}}_2, vhat3/above right/\hat{\mathbf{v}}_3}{
      \filldraw (\pt) circle(.6mm) node[\labpos=1.5mm, fill=white]{$\lab$};
    }
    \draw[fill=lightgray] (4.0, 1.5) coordinate (v1)
    -- (5.5, 2.0) coordinate (v2)
    -- (4.8, 2.7) coordinate (v3) -- cycle;
    \foreach \pt\labpos\lab in {v1/below/\mathbf{v}_1, v2/below right/\mathbf{v}_2, v3/above right/\mathbf{v}_3}{
      \filldraw (\pt) circle(.6mm) node[\labpos=1.5mm, fill=white]{$\lab$};
    }
    \draw[<-] (1.1, 1.1) to[bend left] (4.3, 2.2);
    \node at (2.7, 2.65) {$F:K\rightarrow\hat{K}$};
    \node at (0.6,0.6) {$\hat{K}$};
    \node at (4.75, 2.1) {$K$};
  \end{tikzpicture}
  \end{center}
  \caption{Affine mapping to a reference cell \(\hat{K}\) from a
    typical cell \( K \).  Note that here $F$ maps from the physical
    cell $K$ to the reference cell $\hat{K}$ rather than the other way
  around.}
    \label{fig:affmap}
\end{figure}

Similarly to~\eqref{eq:XXdag}, we define the vector spaces relative to
the reference cell:
\begin{equation}
  \label{eq:XXdaghat}
  \begin{split}
    \hat{X} & \equiv \left( \hat{P} \right)^\nu, \\
    \hat{X}^\dagger & \equiv \left( C_b^k( \hat{K} )^\prime \right)^\nu.
  \end{split}
\end{equation}
As with \( \pi \), we define \( \hat{\pi} \hat{n} \) as the
restriction of \( \hat{n} \) to \( \hat{P} \), and can vectorize it over
\( \hat{X}^\dagger \) accordingly.

This geometric mapping induces a mapping between spaces of functions
over $K$ and $\hat{K}$ as well as between the dual spaces.  These are
called the pull-back, and push-forward operations, respectively:

\begin{definition}
  The \emph{pull-back} operation mapping
  \(
  C^k_b(\hat{K}) \rightarrow C^k_b(K)
  \)
  is defined by
  \begin{equation}
    \label{eq:pullback}
    F^*\left(\hat{f}\right) = \hat{f} \circ F
  \end{equation}
  for each \(\hat{f} \in C^k_b(\hat{K})\).
\end{definition}

\begin{definition}
  The \emph{push-forward} operation mapping the dual space
  \( C^k_b(K)^\prime\) into
  \( C^k_b(\hat{K})^\prime \) is defined by
  \begin{equation}
    F_*(n) = n\circ F^*
  \end{equation}
  for each \(n \in C^k_b(K)^\prime \).
\end{definition}

It is easy to verify that the pull-back and push-forward are linear operations
preserving the vector space operations.  Moreover, they are invertible
iff $F$ itself is.  Therefore, we have
\begin{proposition}
  \label{prop:iso}
  Given finite elements $(K,P,N)$ and
  $(\hat{K},\hat{P},\hat{N})$ such that $F(K)=\hat{K}$
  and $F^*(\hat{P}) = P$,
  $F^*:\hat{P} \rightarrow P$ and
  \( F_*:P^\prime \rightarrow \hat{P}^\prime \) are
  isomorphisms.
\end{proposition}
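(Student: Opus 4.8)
The plan is to peel the statement into its two claimed isomorphisms and derive the second from the first by a transpose (adjoint) argument. First recall that $F$, being the geometric mapping between cells, is invertible, so the pull-back $F^* : C^k_b(\hat{K}) \to C^k_b(K)$ is a linear bijection with inverse $(F^{-1})^*$; this is the fact the excerpt records as ``$F^*$ is invertible iff $F$ is.'' Everything else is then a matter of restricting these global maps to the finite-dimensional subspaces $\hat{P}$, $P$ (and their duals) and verifying that the restrictions remain well defined and bijective.

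For the first isomorphism, consider $F^*$ restricted to $\hat{P}$. It is injective because $F^*$ is injective on all of $C^k_b(\hat{K}) \supset \hat{P}$. It maps into $P$ and is surjective onto $P$, precisely by the hypothesis $F^*(\hat{P}) = P$. Hence $F^*|_{\hat{P}} : \hat{P} \to P$ is a linear isomorphism; in particular $\dim \hat{P} = \dim P$, consistent with both elements carrying $\nu$ nodes.

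For the second isomorphism, observe that when the push-forward is applied to a functional $n \in P^\prime$ it yields $F_* n = n \circ F^*$, and since $F^*$ now denotes the isomorphism $\hat{P} \to P$, this composition is a well-defined element of $\hat{P}^\prime$. Thus $F_*$ restricts to a linear map $P^\prime \to \hat{P}^\prime$ which is exactly the algebraic transpose of $F^*|_{\hat{P}}$. The transpose of an isomorphism of finite-dimensional vector spaces is again an isomorphism, with inverse the transpose of $(F^*|_{\hat{P}})^{-1} = (F^{-1})^*|_{P}$; equivalently, $F_*$ and $(F^{-1})_*$ are mutually inverse on these dual spaces, which gives the claim.

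There is no genuinely hard step here; the only points needing care are bookkeeping ones. One must be explicit that invertibility of $F^*$ on the full $C^k_b$ spaces is what is invoked, so that injectivity passes to the subspace at no cost, and one must check that the domains and codomains line up when restricting the push-forward --- that is, that $n \circ F^*$ lands in $\hat{P}^\prime$ rather than some larger dual --- which is where the hypothesis $F^*(\hat{P}) = P$ is used a second time. Note finally that the nodal sets $N$ and $\hat{N}$ play no role in this statement; they enter only when one later asks how the nodal bases themselves correspond.
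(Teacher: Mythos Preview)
Your argument is correct and follows the same line the paper takes: the paper simply records that pull-back and push-forward are linear and invertible whenever $F$ is, and then states the proposition with ``Therefore, we have'' in lieu of a written proof. You have supplied the details the paper omits --- that injectivity on $\hat{P}$ is inherited from the ambient space, that surjectivity is exactly the hypothesis $F^*(\hat{P})=P$, and that $F_*$ on $P^\prime$ is the transpose of this isomorphism --- so nothing further is needed.
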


The pull-back and push-forward operations are also defined over the vector
spaces \( X \), \(X^\dagger\), \( \hat{X} \), and \( \hat{X}^\dagger
\). If $N$ is a vector of functionals and \( \Phi \) a vector of
functions, then the vector push-forward and pull-back are,
respectively 
\begin{equation}
  \begin{split}
    F_*(N) \in \hat{X}^\dagger, \ \ \ \left( F_*(N) \right)_i & = F_*(n_i), \\
    F^*(\hat{\Phi}) \in X, \ \ \ \left( F^*(\hat{\Phi}) \right)_i & = F^*(\hat{\phi_i}).
  \end{split}
\end{equation}

It will also be useful to consider vectors of functionals acting on
vectors of functions.  We define this to produce a matrix as follows.
If \(N = \begin{bmatrix} n_1 & n_2 & \dots & n_k \end{bmatrix}^T \) is
a collection of functionals and
\( \Phi = \begin{bmatrix} \phi_1 & \phi_2 & \dots &
  \phi_\ell \end{bmatrix}^T \) a collection of functions, then we
define the (outer) product $N(\Phi)$ to be the $k \times \ell$ matrix
\begin{equation}
  \label{eq:nonphi}
\left( N(\Phi) \right)_{ij} = n_i(\phi_j).
\end{equation}
For example, if $N$ is the vector of nodes of a finite element and
$\Psi$ contains the nodal basis functions, then the Kronecker delta
property is expressed as
\(
N(\Psi) = I.
\)

If $M$ is a matrix of numbers of appropriate shape and $\Phi \in X$ 
members of a function space $P$, then $M\Phi$ is just defined by
\(
(M \Phi)_i = \sum_{j=1}^\nu M_{ij} \Phi_j,
\)
according to the usual rule for matrix-vector multiplication.

\begin{lemma}
  Let $N \in X^\dagger$ and $\Phi \in X$ and \(M \in \mathbb{R}^{\nu
    \times \nu}\).  Then
  \begin{equation}
    N(M\Phi) = N(\Phi) M^T.
  \end{equation}
\end{lemma}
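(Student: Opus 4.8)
The plan is to verify the claimed matrix identity entry by entry, using only the definition of the outer product in~\eqref{eq:nonphi}, the definition of matrix--vector multiplication applied to a vector of functions, and the linearity of each functional $n_i$ on $P$. There is no analytic content here; the statement is a bookkeeping identity expressing that a linear functional commutes with finite linear combinations.

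First I would fix indices $1 \le i,j \le \nu$ and expand the $(i,j)$ entry of the left-hand side. By the definition of the product $N(\cdot)$ we have $\left(N(M\Phi)\right)_{ij} = n_i\big((M\Phi)_j\big)$, and since the $j$-th component of $M\Phi$ is $\sum_{k=1}^{\nu} M_{jk}\Phi_k$, linearity of $n_i$ gives $n_i\big((M\Phi)_j\big) = \sum_{k=1}^{\nu} M_{jk}\, n_i(\Phi_k)$. Next I would expand the $(i,j)$ entry of the right-hand side: $\left(N(\Phi)M^T\right)_{ij} = \sum_{k=1}^{\nu} \left(N(\Phi)\right)_{ik}\left(M^T\right)_{kj} = \sum_{k=1}^{\nu} n_i(\Phi_k)\, M_{jk}$, again by~\eqref{eq:nonphi} and the definition of the transpose. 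The two sums coincide term by term, so the matrices are equal.

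The only point requiring any care is the placement of indices on $M$: the transpose appears precisely because the summation index $k$ labels the columns of $M$ in the expression $M\Phi$, whereas on the right-hand side it must label the rows of the factor multiplying $N(\Phi)$. I do not expect any genuine obstacle; one could equally well give an index-free argument by observing that $N(\cdot)$ is linear in its vector argument and that each column of $M\Phi$ is a fixed linear combination of the entries of $\Phi$, but the entrywise computation above is the cleanest presentation.
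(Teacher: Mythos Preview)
Your argument is correct and is essentially identical to the paper's own proof, which also verifies the identity entrywise by expanding $n_i\big((M\Phi)_j\big)$ via linearity and matching it with $\sum_k (N(\Phi))_{ik} M_{jk}$. If anything, your write-up is slightly cleaner, since the paper's displayed chain of equalities inadvertently drops the factor $M_{jk}$ at one step.
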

\begin{proof}
  The proof is a simple calculation:
  \[
  \left( N(M\Phi) \right)_{ij} = n_i \left( \left( M \Phi \right)_j
  \right)
  = n_i \left( \sum_{k=1}^\nu M_{jk} \phi_k \right)
  = \sum_{k=1}^\nu n_i \left( \phi_k\right)
  = \sum_{k=1}^\nu \left(N(\Phi)\right)_{ik} M_{jk}.
  \]
\end{proof}

The relationship between pull-back and push-forward also leads to
the vectorized relation
\begin{lemma}
  Let $N \in X^\dagger$ and $\hat{\Phi} \in \hat{X}$.  Then
  \begin{equation}
    N(F^*(\hat{\Phi})) = F_*(N)(\hat{\Phi})
  \end{equation}
\end{lemma}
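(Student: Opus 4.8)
The plan is to prove the identity entry-by-entry, reducing everything to the scalar definitions of push-forward and the outer product $N(\Phi)$. Both sides of the claimed identity are $\nu \times \nu$ matrices (since $N \in X^\dagger$ has $\nu$ components and $\hat{\Phi} \in \hat{X}$ has $\nu$ components), so it suffices to show that their $(i,j)$ entries coincide for all $1 \le i,j \le \nu$.

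First I would expand the left-hand side. By the definition of the outer product in~\eqref{eq:nonphi} together with the vectorized pull-back, the $(i,j)$ entry of $N(F^*(\hat{\Phi}))$ is $n_i\big( (F^*(\hat{\Phi}))_j \big) = n_i\big( F^*(\hat{\phi}_j) \big)$. Next I would expand the right-hand side: the $(i,j)$ entry of $F_*(N)(\hat{\Phi})$ is $\big(F_*(N)\big)_i(\hat{\phi}_j) = \big(F_*(n_i)\big)(\hat{\phi}_j)$, using the definition of the vectorized push-forward. Now invoking the definition of the push-forward, $F_*(n_i) = n_i \circ F^*$, so $\big(F_*(n_i)\big)(\hat{\phi}_j) = n_i\big( F^*(\hat{\phi}_j) \big)$, which is exactly the expression obtained for the left-hand side. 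Since this holds for every $i$ and $j$, the two matrices are equal.

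There is no real obstacle here: the statement is essentially a bookkeeping identity, and the only point requiring a modicum of care is confirming that the componentwise conventions for vectorizing $F^*$, vectorizing $F_*$, and forming the matrix product $N(\Phi)$ are mutually consistent, so that the indices line up as claimed. Once those conventions are applied verbatim, the identity falls out of the definition $F_*(n) = n \circ F^*$ directly, with no appeal to linearity, invertibility of $F$, or the finite-element structure.
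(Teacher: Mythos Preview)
Your proof is correct: the identity is just the componentwise unwinding of the definitions of the vectorized pull-back, the vectorized push-forward, and the outer product, together with $F_*(n_i) = n_i \circ F^*$. The paper in fact states this lemma without proof, so your entry-by-entry argument is exactly the routine verification the authors leave to the reader.
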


\begin{definition}
  Let $(K,P,N)$ and $(\hat{K}, \hat{P}, \hat{N})$ be
  finite elements and $F$ an affine mapping on $K$.
  Then $(K,P,N)$ and $(\hat{K},\hat{P}, \hat{N})$ are \emph{affine equivalent} if
  \begin{itemize}
  \item $F(K) = \hat{K}$,
  \item The pullback maps $F^*(\hat{P}) = P$ (in the sense of equality
    of vector spaces),
  \item $F_*(N) = \hat{N}$ (in the sense of equality of finite sets).
  \end{itemize}
\end{definition}

\begin{definition}
  Let \( (K, P, N) \) be a finite element of class $C^k$ and
  \(\Psi \in X \) its nodal basis.  The \emph{nodal
    interpolant} \( \mathcal I_N: C_b^k(K) \rightarrow P \) is defined
  by 
  \begin{equation}
    \mathcal{I}(f) = \sum_{i=1}^\nu n_i(f) \psi_i.
  \end{equation}
\end{definition}
This interpolant plays a fundamental role in establishing
approximation properties of finite elements via the Bramble-Hilbert
Lemma~\citep{bramble1971bounds,dupontscott1980}. The
homogeneity arguments in fact go through for the following generalized
notion of element equivalence:
\begin{definition}
Two finite elements $(K, P, N)$ and $(K, P, \tilde{N})$ are
\emph{interpolation equivalent} if $\mathcal{I}_{N} = \mathcal{I}_{\tilde{N}}$.
\end{definition}

\begin{definition}
  If $(K, P, \tilde{N})$ is affine equivalent to $(\hat{K}, \hat{P}, \hat{N})$
  and interpolation equivalent to $(K, P, N)$, then
  $(K, P, N)$ and $(\hat{K}, \hat{P}, \hat{N})$ are
  \emph{affine-interpolation equivalent}.
\end{definition}

Brenner and Scott~\citep{BreSco} give the following result, of which
we shall make use:
\begin{proposition}
Finite elements \( (K, P, N) \) and \( (K, P, \tilde{N}) \) are
interpolation equivalent iff the spans of \( N \) and \( \tilde{N} \),
(viewed as subsets of \( C_b^k(K)^\prime \)), are equal.
\end{proposition}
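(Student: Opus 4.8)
The plan is to recognize that the nodal interpolant $\mathcal{I}_N$ is, in disguise, just the data of the vector of functionals $N$ read off in the nodal basis, and then to convert the operator identity $\mathcal{I}_N = \mathcal{I}_{\tilde N}$ into a linear-algebraic relation between $N$ and $\tilde N$. Throughout let $\nu = \dim P$ and, as in \eqref{eq:nonphi} applied to a single function, let $N(f) \in \mathbb{R}^\nu$ denote the column vector with entries $n_i(f)$, and similarly $\tilde N(f)$. Two preliminary observations do the work. First, by definition $\mathcal{I}_N(f) = \sum_i n_i(f)\psi_i$, so $\mathcal{I}_N(f)$ has coordinates exactly $N(f)$ in the nodal basis $\Psi$; knowing the operator $\mathcal{I}_N$ is therefore the same as knowing the map $f \mapsto N(f)$ together with $\Psi$. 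Second, $f \mapsto N(f)$ is surjective onto $\mathbb{R}^\nu$, since $N\!\left(\sum_i c_i \psi_i\right) = c$ by the Kronecker property; the same holds for $\tilde N$.

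For the ``if'' direction, assume $\mathrm{span}(N) = \mathrm{span}(\tilde N)$ in $C_b^k(K)^\prime$. Then $\tilde N = A N$ for an invertible $A \in \mathbb{R}^{\nu\times\nu}$, i.e. $\tilde n_i = \sum_j A_{ij} n_j$. Set $\tilde\Psi = (A^{-1})^T \Psi$. The Kronecker property gives $\tilde N(\Psi) = A$, so by the lemma $N(M\Phi) = N(\Phi)M^T$ we get $\tilde N(\tilde\Psi) = \tilde N(\Psi)\,A^{-1} = A A^{-1} = I$, whence $\tilde\Psi$ is the nodal basis for $\tilde N$. Expanding $\mathcal{I}_{\tilde N}(f) = \sum_i \tilde n_i(f)\,\tilde\psi_i$ and collecting terms, the coefficient of $\psi_k$ is $\sum_{i,j} A_{ij}(A^{-1})_{ki}\, n_j(f) = \sum_j (A^{-1}A)_{kj}\, n_j(f) = n_k(f)$, so $\mathcal{I}_{\tilde N}(f) = \sum_k n_k(f)\psi_k = \mathcal{I}_N(f)$.

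For the ``only if'' direction, assume $\mathcal{I}_N = \mathcal{I}_{\tilde N}$. Since the nodal basis $\tilde\Psi$ of $(K,P,\tilde N)$ lies in $P = \mathrm{span}\,\Psi$, write $\tilde\psi_i = \sum_j C_{ij}\psi_j$. Expanding both interpolants of an arbitrary $f \in C_b^k(K)$ in the single basis $\Psi$, the coefficient of $\psi_j$ is $n_j(f)$ in $\mathcal{I}_N(f)$ and $\sum_i C_{ij}\,\tilde n_i(f)$ in $\mathcal{I}_{\tilde N}(f)$. Equating these for every $f$ yields the identity of functionals $n_j = \sum_i C_{ij}\,\tilde n_i$, so $\mathrm{span}(N) \subseteq \mathrm{span}(\tilde N)$; since the $n_i$ are linearly independent and the $\tilde n_i$ are linearly independent (both by the definition of a finite element), the two spans have dimension $\nu$ and therefore coincide.

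The one point needing care is not a computation but the logic of the ``only if'' step: restricted to $P$ both interpolants are the identity regardless of the nodes, so nothing distinguishing $N$ from $\tilde N$ can be extracted there. It is exactly by letting $f$ range over all of $C_b^k(K)$ — equivalently, by using the surjectivity of $f \mapsto N(f)$ onto $\mathbb{R}^\nu$ — that the operator identity is upgraded to the functional identity $n_j = \sum_i C_{ij}\,\tilde n_i$, after which the linear independence of the two node sets closes the argument. Everything else is routine bookkeeping with the matrix identities already recorded.
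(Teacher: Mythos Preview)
The paper does not supply its own proof of this proposition; it simply attributes the result to Brenner and Scott and states it for later use. So there is no in-paper argument to compare against.

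Your proof is correct. Both directions are handled cleanly: in the ``if'' direction you produce the change-of-basis matrix $A$ relating $\tilde N$ to $N$, verify that $(A^{-1})^T\Psi$ is the nodal basis for $\tilde N$, and then check directly that the two interpolants agree coefficient by coefficient. In the ``only if'' direction you expand both interpolants in the single basis $\Psi$ and equate coefficients for arbitrary $f\in C_b^k(K)$, obtaining $n_j = \sum_i C_{ij}\tilde n_i$ as an identity of functionals on the whole space; dimension counting then forces the spans to coincide. Your closing remark is exactly the right caution: restricting to $P$ gives no information (both interpolants are the identity there), so the argument genuinely requires $f$ to range over all of $C_b^k(K)$, which is where the hypothesis that $N,\tilde N \subset C_b^k(K)'$ and the definition of interpolation equivalence as equality of operators on $C_b^k(K)$ are used.
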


For Lagrange and certain other finite elements, one simply has that
$F^*(\hat{\Psi}) = \Psi$, which allows for the traditional use of
reference elements used in FEniCS, Firedrake, and countless other codes.
However, for many other elements this is not the case.  It is our goal
in this paper to give a general approach that expresses $\Psi$ as a
linear transformation $M$ applied to $F^*(\hat{\Psi})$.

Before proceeding, we note that approximation theory for Argyris and
other families without affine-interpolation equivalence can proceed by
means of establishing the \emph{almost-affine}
property~\citep{ciarlet1978finite}.  Such proofs can involve
embedding the inequivalent element family into an equivalent one with
the requisite approximation properties.
For example, the Argyris element is proved almost-affine by comparison
to the ``type (5)'' quintic Hermite element.
Although we see definite computational consequences of
affine-equivalence, affine-interpolation equivalence, and neither
among our element families, we our approach to transforming
inequivalent families does not make use of any almost-affine properties.

\section{Transformation theory when \( F^*(\hat{P}) = P \)}
\label{sec:theory}
For now, we assume that the pull-back operation~\eqref{eq:pullback}
appropriately converts
the reference element function space into the physical function space
and discuss the construction of nodal bases based on relationships
between the reference nodes $\hat{N}$ and the pushed-forward
physical nodes $F_*(N)$.

We focus on the simplicial case, although generalizations do not have
a major effect, as we note later.  Throughout, we will
use following convention, developed in~\citep{RognesKirbyEtAl2009a}
for handling facet 
orientation in mixed methods but also useful in order higher-order
Lagrange degrees of freedom.  Since our examples are triangles
(2-simplices), it is not necessary to expand on the entire
convention.  Given a triangle with vertices
\( \left(\mathbf{v}_1, \mathbf{v}_2, \mathbf{v}_3\right) \), we
define edge $\gamma_i$ of the triangle to connect the vertices other
than $\mathbf{v}_i$.  The (unit) tangent vector \(
\mathbf{t}_i =  \begin{bmatrix}t^\mathbf{x}_i &
  t^\mathbf{y}_i\end{bmatrix}^T \), points in the direction from the lower- to the
higher-numbered vertex.
When triangles share an edge, then, they agree on its orientation.  The
normal to an edge is defined by rotating the tangent by applying
the matrix \( R = \begin{bmatrix}0 & 1 \\ -1 & 0 \end{bmatrix} \) so
that
\(\mathbf{n}_i = R \mathbf{t}_i = \begin{bmatrix} n^\mathbf{x}_i & n^\mathbf{y}_i\end{bmatrix}^T\)
We also let \( \mathbf{e}_i \) denote the midpoint of \( \gamma_i \).

Now, we fix some notation for describing nodes.  First, we
define $\delta_{\mathbf{x}}$ acting on any continuous function by pointwise
evaluation.  That is:
\begin{equation}
  \delta_{\mathbf{x}}(p) = p(\mathbf{x}).
\end{equation}
We let $\delta^{\mathbf{s}}_\mathbf{x}$ denote the
directional derivative in direction $\mathbf{s}$ at a point
$\mathbf{x}$, so that
\begin{equation}
  \delta^{\mathbf{s}}_\mathbf{x}(p)
  = \mathbf{s}^T \nabla p(\mathbf{x}).
\end{equation}
We use repeated superscripts to indicate higher-order derivatives,
so that \( \delta^{\mathbf{x}\mathbf{x}}_{\mathbf{x}} \) defines the second
directional derivative along the $x$-axis at point $\mathbf{x}$.

It will also be convenient to use block notation, with a single symbol
representing two or items.  For example, the gradient notation
\[
\nabla_{\mathbf{x}} = \begin{bmatrix}
  \delta^{\mathbf{x}}_{\mathbf{x}} &
  \delta^{\mathbf{y}}_{\mathbf{x}} \end{bmatrix}^T
\]
gives the pair of functionals evaluating the Cartesian derivatives
at a point $\mathbf{x}$.  To denote a gradient in a different basis, we
append the directions as superscripts so that
\[
\nabla^{\mathbf{nt}}_{\mathbf{x}} =
\begin{bmatrix} \delta^{\mathbf{n}}_{\mathbf{x}} &
  \delta^{\mathbf{t}}_{\mathbf{x}} \end{bmatrix}^T
\] contains the normal and tangential derivatives at a point
\(\mathbf{x}\). 

Similarly, we let
\[
\bigtriangleup_{\mathbf{v}} = \begin{bmatrix}
  \delta^{\mathbf{xx}}_{\mathbf{x}} & \delta^{\mathbf{xy}}_{\mathbf{x}}
  & \delta^{\mathbf{yy}}_{\mathbf{x}} \end{bmatrix}^T
\]
denote the vector of three functionals evaluating the unique 
(supposing sufficient smoothness) second partials at
\(\mathbf{x}\).

Let
\( \Psi = \{\psi_i\}_{i=1}^{\nu}\) be the nodal basis for
a finite element \((K, P, N)\) and
\( \hat{\Psi} = \{ \hat{\psi}_i \}_{i=1}^{\nu}\)
that for a reference element \( \left(\hat{K},\hat{P},\hat{N} \right)
\).  We also assume that \(F(K) = \hat{K}\) and \( F^*(\hat{P})=P \).
Because the pull-back is
invertible, it maps linearly independent sets to linearly independent
sets.  So, \(F^*(\hat{\Psi})\) must also be a basis for \(P\).
There exists an invertible
$\nu \times \nu$ matrix $M$ such that 
\begin{equation}
  \label{eq:M}
  \Psi = M F^*(\hat{\Psi}),
\end{equation}
or equivalently, that each nodal basis function is some linear
combination of the pull-backs of the reference nodal basis functions.

Our theory for transforming the basis functions (\ie~computing the
matrix $M$) will work via duality -- relating the matrix $M$ to how
the nodes, or at least their restrictions to the finite-dimensional
spaces, push forward.

It will be useful to define as an intermediate $\nu \times \nu$ matrix
\( B=F_*(N)(\hat{\Psi}) \).  Recall from~\eqref{eq:nonphi}
that its
entries for $1 \leq i, j \leq \nu$ are
\begin{equation}
  \label{eq:Fmat}
  B_{ij} \equiv F_*(n_i)(\hat{\psi}_j) = n_i(F^*(\hat{\psi}_j))
\end{equation}
This matrix, having nodes only applied to members of \( P \)  is
indifferent to restrictions and so
\( B = F_*(\pi N)(\hat{\Psi}) \) as well.

Because of Proposition~\ref{prop:iso} and finite-dimensionality, the 
the nodal sets
\( \hat{\pi}\hat{N} \) and \( F_*(\pi N) \)
are both bases for $\hat{P}^\prime$, and so there
exists an invertible $\nu \times \nu$ matrix $V$ such that
\begin{equation}
  \label{eq:V}
  \hat{\pi} \hat{N} = V F_*(\pi N)
\end{equation}

Frequently, it may be easier to express the pushed-forward nodes as
a linear combination of the reference nodes.  In this case, one
obtains the matrix $V^{-1}$.  At any rate, the matrices \( V \) and \(
M \) are closely related.
\begin{theorem}
  For finite elements $(K,P,N)$ and
  $(\hat{K},\hat{P},\hat{N})$ with $F(K)=\hat{K}$ and
  $F_*(\hat{P}) = P$, the matrices in~\eqref{eq:M}
  and~\eqref{eq:V} satisfy
  \begin{equation}
    M = V^T.
  \end{equation}
  \label{thm:MVt}
\end{theorem}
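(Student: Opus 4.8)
The plan is to chain together the three identities already proved—Lemma on $N(M\Phi) = N(\Phi)M^T$, Lemma on $N(F^*(\hat\Phi)) = F_*(N)(\hat\Phi)$, and the Kronecker delta property $N(\Psi) = I$—and exploit invertibility of the intermediate matrix $B$. First I would take the defining relation $\Psi = M F^*(\hat\Psi)$ from~\eqref{eq:M} and apply the vector of nodes $N$ to both sides, obtaining $N(\Psi) = N(M F^*(\hat\Psi))$. The left-hand side is the identity by the Kronecker delta property. For the right-hand side, I would first pull the matrix $M$ out using the lemma $N(M\Phi) = N(\Phi)M^T$, giving $N(F^*(\hat\Psi)) M^T$, and then rewrite $N(F^*(\hat\Psi))$ as $F_*(N)(\hat\Psi)$ using the pull-back/push-forward lemma. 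Since $B = F_*(N)(\hat\Psi) = F_*(\pi N)(\hat\Psi)$, this yields $I = B M^T$, i.e. $M^T = B^{-1}$, so $M = (B^{-1})^T = (B^T)^{-1}$.

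Next I would run the analogous computation on the $V$ side. Applying both sides of $\hat\pi \hat N = V F_*(\pi N)$ to the reference nodal basis $\hat\Psi$ gives $\hat\pi\hat N(\hat\Psi) = (V F_*(\pi N))(\hat\Psi)$. The left side is $\hat N(\hat\Psi) = I$ by the Kronecker delta property on the reference element (restriction to $\hat P$ is harmless since $\hat\Psi \subset \hat P$). On the right, the lemma $N(M\Phi) = N(\Phi)M^T$ applied with the matrix $V$ gives $F_*(\pi N)(\hat\Psi) V^T = B V^T$. Hence $I = B V^T$, so $V^T = B^{-1}$ as well. Comparing the two displays $M^T = B^{-1}$ and $V^T = B^{-1}$ forces $M^T = V^T$, and since transpose is injective, $M = V^T$ would follow—wait, that gives $M = V$; let me recheck the bookkeeping: from $M^T = B^{-1}$ we get $M = B^{-T}$, and from $V^T = B^{-1}$ we get $V = B^{-T}$, so $M = V$, not $M = V^T$. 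So the clean route must instead be: from $I = BM^T$ we get $M^T = B^{-1}$; we do \emph{not} separately solve for $V$ but rather observe that the relation $\hat\pi\hat N = V F_*(\pi N)$ together with $B = F_*(\pi N)(\hat\Psi)$ and $\hat N(\hat\Psi) = I$ gives $I = V B$, hence $V = B^{-1}$ and therefore $V^T = B^{-T} = M$. The key is to be careful about whether the matrix multiplies on the left or the right when we expand $(V F_*(\pi N))(\hat\Psi)$; the lemma says applying a node-vector built as $V$ times a node-vector to a function-vector produces $B V^T$, so actually $I = B V^T$ and $V^T = B^{-1}$...

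The resolution, and what I expect to be the one genuinely delicate point, is the direction of the outer product in~\eqref{eq:nonphi} and which transpose the lemma $N(M\Phi) = N(\Phi)M^T$ introduces when the matrix sits on the \emph{functional} side versus the \emph{function} side. The statement $\hat\pi\hat N = V F_*(\pi N)$ has $V$ multiplying a vector of functionals, so expanding $(\hat\pi\hat N)(\hat\Psi)$ entrywise I would write $\sum_k V_{ik} (F_*(\pi N))_k(\hat\psi_j) = \sum_k V_{ik} B_{kj}$, which is exactly $(VB)_{ij}$; thus $I = VB$, giving $V = B^{-1}$. On the other side, $\Psi = MF^*(\hat\Psi)$ has $M$ multiplying a vector of \emph{functions}, so $N(\Psi)_{ij} = n_i(\sum_k M_{jk} F^*(\hat\psi_k)) = \sum_k M_{jk} B_{ik} = (B M^T)_{ij}$; thus $I = BM^T$, giving $M^T = B^{-1} = V$, i.e. $M = V^T$. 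So the whole proof is three lines once the index conventions are pinned down: $I = N(\Psi) = N(MF^*(\hat\Psi)) = N(F^*(\hat\Psi))M^T = B M^T$ and $I = (\hat\pi\hat N)(\hat\Psi) = (V F_*(\pi N))(\hat\Psi) = V B$, whence $M^T = B^{-1} = V$. The main obstacle is purely notational—tracking left- versus right-multiplication and the resulting transposes through the matrix-of-functionals-times-matrix-of-functions pairing—and the proof should explicitly display both index computations rather than invoke the packaged lemmas, to make the asymmetry transparent.
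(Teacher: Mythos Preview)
Your final three-line argument---$I = N(\Psi) = N(MF^*(\hat\Psi)) = N(F^*(\hat\Psi))M^T = BM^T$ and $I = (\hat\pi\hat N)(\hat\Psi) = (VF_*(\pi N))(\hat\Psi) = VB$, whence $M = B^{-T}$ and $V = B^{-1}$, so $M = V^T$---is exactly the paper's proof. Your earlier confusion stemmed from trying to apply the lemma $N(M\Phi) = N(\Phi)M^T$ (which concerns matrices acting on vectors of \emph{functions}) to the functional side; the index computation you eventually wrote out, showing $(VN)(\hat\Psi)_{ij} = \sum_k V_{ik}B_{kj} = (VB)_{ij}$, is the correct resolution and is precisely what the paper uses implicitly.
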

\begin{proof}
  We proceed by relating both matrices to $B$ defined
  in~\eqref{eq:Fmat} via the Kronecker property of nodal bases.
  First, we have
  \[
    I = N(\Psi) = N(MF^*(\hat{\Psi})) = N(F^*(\hat{\Psi})) M^T
    = B M^T.
  \]
  so that
  \begin{equation}
    M = B^{-T}.
  \end{equation}
  Similarly,
  \[
  I = \left(VF_*(N)\right)(\hat{\Psi})
    = V F_*(N)(\hat{\Psi})
    = V B,
  \]
  so that \(V=B^{-1}\)
  and the result follows.
\end{proof}

That is, to relate the pullback of the reference element basis
functions to any element's basis functions, it is sufficient to
determine the relationship between the nodes.

\subsection{Affine equivalence: The Lagrange element}
\label{sec:lagrange}
When elements form affine-equivalent families, the matrix $M$ has a
particularly simple form.
\begin{theorem}
  \label{thm:aeI}
If $(K,P,N)$ and
  $(\hat{K},\hat{P},\hat{N})$ are affine-equivalent finite
elements then the transformation matrix $M$ is the identity.
\end{theorem}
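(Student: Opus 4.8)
The plan is to invoke Theorem~\ref{thm:MVt}, which reduces the claim to showing that the matrix $V$ in~\eqref{eq:V} is the identity; equivalently, that $\hat{\pi}\hat{N} = F_*(\pi N)$ as vectors of functionals on $\hat P$. First I would unpack what affine equivalence gives us: by hypothesis $F(K) = \hat K$, $F^*(\hat P) = P$ (so Proposition~\ref{prop:iso} applies and $F_*$ is an isomorphism $P' \to \hat P'$), and $F_*(N) = \hat N$ as \emph{finite sets}. The one subtlety to address is that set equality does not immediately give the coordinatewise equality $F_*(n_i) = \hat n_i$ needed to conclude $V = I$; a priori the push-forward could permute the nodes. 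I would handle this by noting that the labelling of the reference nodes is a matter of convention — we are free to index $\hat N$ so that $\hat n_i = F_*(n_i)$ for each $i$ — or, more carefully, by observing that the whole setup is invariant under a simultaneous permutation of $N$ and $\hat N$, so without loss of generality the sets are matched index-by-index. Once this is fixed, restricting to $\hat P$ gives $\hat\pi\hat n_i = \hat\pi F_*(n_i) = F_*(\pi n_i)$ (the push-forward commutes with restriction to the finite-dimensional space since it only ever sees arguments in $\hat P = F^*(\hat P)^{\vee}$... more precisely, $F_*(n_i)(\hat p) = n_i(F^*(\hat p))$ and $F^*(\hat p) \in P$, so only $\pi n_i$ matters), hence $\hat\pi\hat N = F_*(\pi N)$, i.e.\ $V = I$ in~\eqref{eq:V}.

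Then Theorem~\ref{thm:MVt} yields $M = V^T = I^T = I$, which is exactly the claim. Alternatively — and this is worth including as a sanity check independent of Theorem~\ref{thm:MVt} — I would argue directly: the pulled-back functions $F^*(\hat\Psi)$ are a basis for $P$, and applying the (matched) nodes gives $N(F^*(\hat\Psi)) = F_*(N)(\hat\Psi) = \hat N(\hat\Psi) = I$ by the second vectorized lemma and the Kronecker property of $\hat\Psi$. So $F^*(\hat\Psi)$ is a collection of functions in $P$ satisfying $n_i(F^*(\hat\psi_j)) = \delta_{ij}$; by uniqueness of the nodal basis this forces $F^*(\hat\Psi) = \Psi$, i.e.\ $M = I$ in~\eqref{eq:M}.

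The main obstacle is the bookkeeping around set-versus-ordered equality of the nodes: the definition of affine equivalence only asserts $F_*(N) = \hat N$ as sets, so the proof must either silently assume a compatible ordering (standard in this literature) or spend a sentence justifying that we may choose one. Beyond that, every step is a one-line application of the lemmas already established: the identity $N(F^*(\hat\Phi)) = F_*(N)(\hat\Phi)$, the Kronecker property $\hat N(\hat\Psi) = I$, and the indifference of $B$ (and hence of everything) to restriction. No genuine computation is required.
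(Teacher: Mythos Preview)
Your proposal is correct, and your ``sanity check'' direct argument---computing $N(F^*(\hat\Psi)) = F_*(N)(\hat\Psi) = \hat N(\hat\Psi) = I$ and invoking uniqueness of the nodal basis---is exactly the paper's proof, verbatim. The detour through $V$ and Theorem~\ref{thm:MVt} is unnecessary (though valid), and the paper silently assumes the compatible ordering of nodes that you took the trouble to justify.
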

\begin{proof}
  Suppose the two elements are affine-equivalent, so that $F_*(N) = \hat{N}$.  Then, a direct calculation gives
  \[
    N(F^*(\hat{\Psi})) = F_*(N)(\hat{\Psi}) = \hat{N}(\hat{\Psi}) = I
  \]
  so that $M=I$.
\end{proof}

The \emph{Lagrange} elements are the most widely used finite elements
and form the prototypical affine-equivalent family~\citep{BreSco}.   For a simplex
$K$ in dimension $d$ and integer $r \geq 1$, one defines $P = P_r(K)$ to be the space
of polynomials over $K$ of total degree no greater than $r$, which has
dimension $\binom{r+d}{d}$.  The nodes are taken to be pointwise
evaluation at a lattice of $\binom{r+d}{d}$ points.  Classically,
these are taken to be regular and equispaced, although options with
superior interpolation and conditioning properties for large $r$ are
also known~\cite{HesWar}.  One must ensure that nodal locations are
chosen at the boundary to enable $C^0$ continuity between adjacent
elements.  A cubic Lagrange triangle ($r=3$ and $d=2$) is shown
earlier in Figure~\ref{lag3}.

The practical effect of Theorem~\ref{thm:aeI} is that the reference
element paradigm ``works.''  That is, a computer code contains a
routine to evaluate the nodal basis \(
\hat{\Psi} \) and its derivatives for a reference
element \( (\hat{K}, \hat{P}, \hat{N}) \).  Then, this routine is
called at a set of quadrature points in \( \hat{K} \).  One obtains
values of the nodal basis at quadrature points on each cell \( K \) by
pull-back, so no additional work is required.  To obtain the gradients of
each basis function at each quadrature point, one simply multiplies
each basis gradient at each point by \( J^T \).

On the other hand, when \( M \neq I \), the usage of tabulated
reference values is more complex.  Given a table
\begin{equation}
  \label{eq:varpsi}
  \hat{\varPsi}_{iq} = \hat{\psi}_i(\hat{\xi}_q)
\end{equation}
of the reference basis at the reference quadrature points,
one finds the nodal basis for \( (K, P, N) \) by constructing \( M \)
for that element and then computing the matrix-vector product
\( M \hat{\varPsi} \) so that
\begin{equation}
  \label{eq:psiiq}
  \psi_i(\xi_q) = \sum_{k=1}^{\nu} M_{i,k} \hat{\varPsi}_{k,q}
\end{equation}

Mapping gradients from the reference element requires both
multiplication by \( M \) as well as application of \( J^T \) by the
chain rule.  We define \( D\hat{\varPsi} \in \mathbb{R}^{\nu \times |\xi|
  \times 2} \) by
\begin{equation}
  D\hat{\varPsi}_{i,q,:} = \hat{\nabla}\hat{\psi}_i(\hat{\xi})_q.
\end{equation}

Then, the basis gradients requires contraction with \( M \)
\begin{equation}
  \label{eq:dvarpsiprime}
  D\varPsi^\prime_{i,q,:} := \sum_{k=1}^{\nu} M_{i,k} D\hat{\varPsi}_{k,q,:},
\end{equation}
followed by the chain rule
\begin{equation}
  \label{eq:dvarpsi}
  D\varPsi_{i,q,:} := J^T D\varPsi^\prime_{i,q,:}.
\end{equation}
In fact, the application of \( M \) and \( J^T \) can be
performed in either order.  Note that applying \( M \) requires an \(
\nu \times \nu \) 
matrix-vector multiplication and in principle couples all basis functions
together, while applying \( J^T \) works pointwise on each basis
function separately.  When \( M \) is quite sparse, one expects this
to be a small additional cost compared to the other required
arithmetic.  We present further details for this in the case of Hermite
elements, to which we now turn.

\subsection{The Hermite element: affine-interpolation equivalence}
The Hermite triangle~\cite{ciarlet1972general}, show in
Figure~\ref{herm3} is based
cubic polynomials, although higher-order instances can also be
defined~\citep{BreSco}.  In contrast to the 
Lagrange element, its node set includes function values and
derivatives at the nodes, as well as an interior function value.
The resulting finite element spaces have $C^0$ continuity with 
$C^1$ continuity at vertices.  
They provide a classic example of elements that are not
affine equivalent but instead give affine-interpolation equivalent
families. 

We will let $(K,P,N)$ be a cubic Hermite triangle, specifying the
gradient at each vertex in terms of the Cartesian derivatives -- see
Figure~\ref{physherm}.  Let $\{\mathbf{v}_i\}_{i=1}^3$ be the three vertices
of $K$ and $\mathbf{v}_4$ its barycenter.  We order the nodes $N$ by
\begin{equation}
  N =
  \begin{bmatrix}
    \delta_{\mathbf{v}_1} &
    \nabla_{\mathbf{v}_1}^T &
    \delta_{\mathbf{v}_2} &
    \nabla_{\mathbf{v}_2}^T &
    \delta_{\mathbf{v}_3} &
    \nabla_{\mathbf{v}_3}^T &
    \delta_{\mathbf{v}_4}
  \end{bmatrix}^T,
\end{equation}
using block notation.

\begin{figure}
  \begin{subfigure}[t]{0.4\textwidth}
    \begin{center}
  \begin{tikzpicture}[scale=2.0] 
    \draw[fill=cyan] (0,0) -- (1, 0) -- (0, 1) -- cycle;
    \foreach \i/\j in {0/0, 1/0, 0/1}{
      \draw[fill=black] (\i, \j) circle (0.02);
      \draw[->] (\i, \j) -- (\i, \j + 0.1);
      \draw[->] (\i, \j) -- (\i+0.1, \j);
    }
    \draw[fill=black] (1/3, 1/3) circle (0.02);
  \end{tikzpicture}
  \end{center}
  \caption{Reference Hermite element}
  \label{refherm}
  \end{subfigure}
  \hfill
  \begin{subfigure}[t]{0.4\textwidth}
    \begin{center}
      \begin{tikzpicture}[scale=2.0]
    \draw[fill=cyan] (0,0) -- (1.5, 0.5) -- (0.8, 1.2) -- cycle;
    \foreach \i/\j in {0/0, 1.5/0.5, 0.8/1.2}{
      \draw[fill=black] (\i, \j) circle (0.02);
      \draw[->] (\i, \j) -- (\i, \j + 0.1);
      \draw[->] (\i, \j) -- (\i+0.1, \j);
    }
    \draw[fill=black] (0.7667, 0.5667) circle (0.02);        
    \end{tikzpicture}
    \end{center}
    \caption{Physical Hermite element}
    \label{physherm}
  \end{subfigure}
  \caption{Reference and physical cubic Hermite elements with
    gradient degrees of freedom expressed in terms of local Cartesian
    directional derivatives.}
  \label{fig:hermrefandphys}
\end{figure}
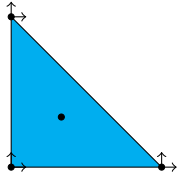
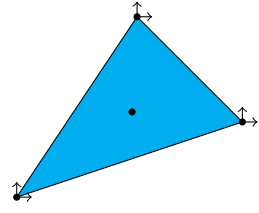

Now, we fix the reference element $(\hat{K}, \hat{P}, \hat{N})$ with
\( \hat{K} \) as the unit right triangle and express the gradient by the
derivatives in the direction of the reference Cartesian coordinates, as in
Figure~\ref{refherm}.
Let \( \{\hat{\mathbf{v}}_i\}_{i=1}^3 \) be the three vertices
of \( \hat{K} \) and \( \hat{\mathbf{v}}_4 \) its barycenter.
We define \( \hat{N} \) analogously to \( N \).

Consider the relationship between the nodal basis
functions $\Psi$ and the pulled-back $F^*(\hat{\Psi})$.  For any
$\hat{\psi} \in \hat{P}$, the chain rule leads to
\begin{equation}
  \label{eq:justthechainrule}
\nabla (\hat{\psi} \circ F) = J^T \hat{\nabla} \hat{\psi} \circ F.
\end{equation}

Now, suppose that $\hat{\psi}$ is a nodal basis function corresponding to
evaluation at a vertex or the barycenter, so that
\( \delta_{\hat{\mathbf{v}}_i}\hat{\psi} = 1 \) for some \( 1 \leq i \leq
4 \), with  the remaining reference nodes vanishing on \( \hat{\psi} \).
We compute that
\[
\delta_{\mathbf{v}_i} F^*(\hat{\psi})
= ( \hat{\psi} \circ F )\left( \mathbf{v}_i \right)
= \hat{\psi}(\hat{v}_i) = 1,
\]
while
\( \delta_{\mathbf{v}_j} F^*(\hat{\psi}) = 0 \)
for \( 1 \leq j \leq 4 \) with \( j \neq i \).
Also, since the reference gradient of $\hat{\psi}$ vanishes at each
vertex,~\eqref{eq:justthechainrule} implies that the physical gradient
of \( F^*(\hat{\psi}) \) must also vanish at each vertex.  So, pulling back
\( \hat{\psi} \) gives the corresponding nodal basis function for \(
(K, P, N) \).

The situation changes for the derivative basis functions.  Now
take $\hat{\psi}$ to be the basis function with unit-valued derivative
in, say, the \( \hat{\mathbf{x}} \) direction at vertex \(
\hat{\mathbf{v}}_i \) and other degrees of freedom vanishing.  Since
it vanishes at each vertex and the barycenter of \( \hat{K} \),
\( F^*(\hat{\psi}) \) will vanish at each vertex  and the barycenter of \(
K \).  The reference gradient of \(\hat{\psi}\) vanishes at the vertices other
than \( i \), so the physical gradient of its pullback must also
vanish at the corresponding vertices of \( K \).
However,~\eqref{eq:justthechainrule} shows that
\( \nabla(\hat{\psi} \circ F) \) will typically not yield
\( \begin{bmatrix} 1 & 0 \end{bmatrix}^T \) at \( \mathbf{v}_i \).
Consequently, the pull-backs of the reference derivative basis
functions do not produce the physical
basis functions.

Equivalently, we may express this failure in terms of the nodes --
pushing forward \( N \) does not yield \( \hat{N} \).  We
demonstrate this pictorially in Figure~\ref{fig:hermpushforward},
showing the images of the derivative nodes under push-forward do not
correspond to the reference derivative nodes.  Taking this view allows us
to address the issue using Theorem~\ref{thm:MVt}.

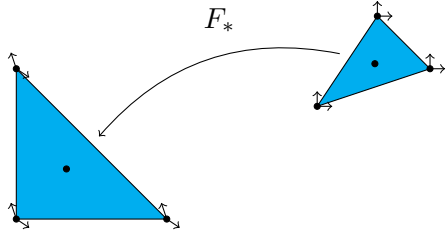
\begin{figure}
  \begin{center}
  \begin{tikzpicture}
    \draw[fill=cyan] (0,0) -- (2,0) -- (0,2) -- cycle;
    \foreach \x\y in {0/0, 2/0, 0/2}{
      \filldraw (\x,\y) circle(0.04);
      \draw[->] (\x,\y) -- (\x+6/7*0.2,\y-4/7*0.2);
      \draw[->] (\x,\y) -- (\x-5/14*0.2, \y+15/14*0.2);
    }
    \draw[fill=black] (2/3, 2/3) circle(0.04);

    \draw[fill=cyan] (4.0, 1.5) -- (5.5, 2.0) -- (4.8, 2.7) -- cycle;
    \foreach \x\y in {4/1.5, 5.5/2, 4.8/2.7}{
      \filldraw (\x,\y) circle(.04);
      \draw[->] (\x,\y)--(\x,\y+0.2);
      \draw[->] (\x,\y)--(\x+0.2,\y);
    }
    \filldraw (4.767, 2.067) circle (0.04);
    \draw[<-] (1.1, 1.1) to[bend left] (4.3, 2.2);
    \node at (2.7, 2.65) {$F_*$};
    \end{tikzpicture}
  \end{center}
  \caption{Pushing forward the Hermite derivative nodes in physical
    space does \emph{not} produce the reference derivative nodes.}
\label{fig:hermpushforward}
\end{figure}

This discussion using the chain rule can be summarized by the
matrix-valued equation
\begin{equation}
  F_*(N) =
  \begin{bmatrix}
    1 & 0 & 0 & 0 & 0 & 0 & 0 \\
    0 & J^T & 0 & 0 & 0 & 0 & 0 \\
    0 & 0 & 1 & 0 & 0 & 0 & 0 \\
    0 & 0 & 0 & J^T & 0 & 0 & 0 \\
    0 & 0 & 0 & 0 & 1 & 0 & 0 \\
    0 & 0 & 0 & 0 & 0 & J^T & 0 \\
    0 & 0 & 0 & 0 & 0 & 0 & 1
    \end{bmatrix}
  \hat{N},
  \label{eq:blockguy}
\end{equation}
noting that the second, fourth, and sixth rows and columns of this
matrix are blocks of two, and each ``$0$'' is taken to be the zero
matrix of appropriate size.  This is exactly the inverse of \( V \)
from Theorem~\ref{thm:MVt}.

In this case, the transformation $V$ is quite local --
that is, only the push-forward of nodes at a given point are used to
construct the reference nodes at the image of that point.  This seems
to be generally true for interpolation-equivalent elements, although
functionals with broader support (e.g. integral moments over the
cell or a facet thereof) would require a slight adaptation.
We will see presently for Morley and Argyris elements
that the transformation neeed not be block diagonal for elements
without interpolation equivalence. 
At any rate, the following elementary observation from linear algebra
suggests the sparsity of \( V \):
\begin{proposition}
  \label{prop:span}
  Let \( W \) be a vector space with sets of vectors
  \( W_1 = \{ w^1_i \}_{i=1}^m
  \subset W \) and \( W_2 = \{ w^2_i \}_{i=1}^n \).  Suppose that
  \( \mathrm{span} W_1 \subset \mathrm{span W_2} \) so that there exists a
  matrix \( A \in \mathbb{R}^{m \times n} \) such that \(w^1_i =
  \sum_{k=1}^n A_{ik} w^2_k \).  If we further have that some \( w^1_i
  \in \mathrm{span} \{ w^2_j \}_{j\in \mathcal{J}} \) for some \(
  \mathcal{J} \subset [1, n] \), then \( A_{ij} = 0 \) for all \( j
  \notin \mathcal{J} \).
\end{proposition}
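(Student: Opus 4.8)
The plan is to reduce the statement to the uniqueness of coordinate representations with respect to a basis. One point needs care at the outset: as stated, the matrix $A$ need not be unique. If $W_2$ is linearly dependent, then $w^1_i$ can be written as many different linear combinations of the $w^2_k$, and the conclusion $A_{ij}=0$ cannot possibly hold for \emph{every} such $A$. In the setting where this proposition is applied, however, $W_2$ is in fact a basis for its span (concretely it is $F_*(\pi N)$, a basis for $\hat{P}^\prime$), so I would either append the hypothesis that $W_2$ is linearly independent or simply carry out the argument under that minimal assumption, under which $A$ is uniquely determined.

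Granting that $W_2$ is linearly independent, the argument is short. First, since $w^1_i \in \mathrm{span}\{w^2_j\}_{j \in \mathcal{J}}$, there are scalars $c_j$, $j \in \mathcal{J}$, with $w^1_i = \sum_{j \in \mathcal{J}} c_j w^2_j$. Define $\tilde{c} \in \mathbb{R}^n$ by $\tilde{c}_j = c_j$ for $j \in \mathcal{J}$ and $\tilde{c}_j = 0$ otherwise; then $w^1_i = \sum_{k=1}^n \tilde{c}_k w^2_k$, so $\tilde{c}$ is a vector of coordinates of $w^1_i$ in the basis $W_2$ of $\mathrm{span}\,W_2$. Second, the $i$-th row of $A$ supplies another such coordinate vector, because $w^1_i = \sum_{k=1}^n A_{ik} w^2_k$ by hypothesis. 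Since $W_2$ is linearly independent, coordinates relative to it are unique, hence $A_{ik} = \tilde{c}_k$ for all $k$; in particular $A_{ij} = \tilde{c}_j = 0$ for every $j \notin \mathcal{J}$, which is exactly the claim.

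I do not anticipate a genuine obstacle in the computation; the only subtlety is the implicit independence (equivalently, uniqueness) hypothesis on $W_2$ flagged above. If one wished to retain the statement verbatim without that hypothesis, the natural remedy is to read $A$ as the matrix produced by a fixed choice of basis of $\mathrm{span}\,W_2$ and to express every vector through that basis; the proof is then word-for-word the same. In either formulation the essential content is simply that a vector lying in the coordinate subspace spanned by a subset of a basis has vanishing coordinates outside that subset.
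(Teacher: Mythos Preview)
The paper states this proposition as an ``elementary observation from linear algebra'' and does not supply a proof, so there is no argument to compare against. Your proof is correct, and you are right to flag the missing hypothesis: as written, $W_2$ need not be linearly independent, so $A$ is not unique and the conclusion can fail; in the paper's actual application $W_2$ is $F_*(\pi N)$, a basis of $\hat P'$, so the independence holds and your uniqueness-of-coordinates argument goes through.
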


Our theory applies equally to the
general family of Hermite triangles of degree \( k \geq 3\).
In those cases, the nodes consist of gradients at vertices
together with point-wise values at appropriate places.  All
higher-order cases generate $C^0$ families of elements with
$C^1$-continuity at vertices.  The $V$ matrix remains 
analogous to the cubic case, with $J^{-T}$ on the diagonal in three places
corresponding to the vertex derivative nodes.  No major differences
appear for the tetradral Hermite elements, either.

As we saw earlier, Hermite and other elements for which \( M \neq I \)
incur an additional cost in mapping from the reference element, as one
must compute basis function values and gradients via~\eqref{eq:psiiq}
and~\eqref{eq:dvarpsi}.  The key driver of this additional cost is the
application of \( M \).  Since \( M \) is very sparse for Hermite
elements -- just 12 nonzeros counting the 1's on the diagonal --
evaluating~\eqref{eq:psiiq} requires just \( 12 \) operations per
column, so a 10-point quadrature rule requires 120 operations.
Evaluating~\eqref{eq:dvarpsiprime} requires twice this,
or 240 operations.  Applying \( J^T \) in~\eqref{eq:dvarpsi} is
required whether Hermite or Lagrange elements are used.  It requires
$4 \times 10$ times the number of quadrature points used -- so a
10-point rule
would require 400 operations.  Hence, the chain rule
costs more than the application of \( M \) in this situation.
On the other hand, building an element stiffness matrix requires a
double loop over these 10 basis functions nested with a loop over the,
say, 10 quadrature points.  Hence, the loop body requires 1000
iterations, and with even a handful of operations will easily dominate 
the additional cost of multiplying by \( M \).

\subsection{The Morley and Argyris elements}
The construction of \(C^1\) finite elements, required for problems
such as plate bending or the Cahn-Hilliard equations, is a
long-standing difficulty.  Although it
is possible to work around this requirement by rewriting the
fourth-order problem as a lower order system or by using \(
C^0 \) elements in conjunction with variational form penalizing the
jumps in derivatives~\citep{engel2002continuous, wells2007c}, this
doesn't actually give a \( C^1 \) solution.

The quadratic Morley triangle~\citep{morley1971constant}, shown in
Figure~\ref{morley}, finds application in plate-bending problems and
also provides a relatively simple motivation for and application of
the theory developed here.
The six degrees of freedom, vertex values and
the normal derivatives on each edge midpoint, lead to an assembled finite
element space that is neither \( C^0 \) nor \( C^1 \), but it is still
suitable as a convergent nonconforming approximation for fourth-order
problems.

The quintic Argyris triangle~\citep{argyris1968tuba}, shown in
Figure~\ref{arg}, with its 21 degrees, gives a proper \( C^1
\) finite element.
Hence it can be used generically for fourth-order problems as well as
second-order problems for which a continuously differentiable solution
is desired.  The Argyris elements use the
values, gradients, and second derivatives at each triangle vertex plus
the normal derivatives at edge midpoints as the twenty-one
degrees of freedom.

It has been suggested that the Bell element~\citep{bell1969refined}
represents a simpler \( C^1 \) element than the Argyris element, on
the account that it has fewer degrees of freedom.  Shown in
Figure~\ref{bell}, we see that the edge normal derivatives have been
removed from the Argyris element.  However, this comes with a
(smaller but) more complicated function space.  Rather than full
quintic polynomials, the Bell element uses quintic polynomials
that have normal derivatives on each edge of only third degree.  This
constraint on the polynomial space turns out to complicate the
transformation of Bell elements compared to Hermite or even Argyris.
For the rest of this section, we focus on Morley and Argyris,
returning to Bell later.

It can readily be seen that, like the Hermite element, the standard
affine mapping will not preserve nodal bases.  Unlike
the Hermite element, however, the Morley and Argyris elements do not form
affine-interpolation equivalent families -- the spans of the nodes are
not preserved under push-forward thanks to the edge normal derivatives
-- see Figure~\ref{fig:morleypushforward}.
As the Morley and Aryris nodal sets do not contain a full gradient at
edge midpoints, the technique used for Hermite elements cannot be
directly applied.

\begin{figure}
  \begin{center}
  \begin{tikzpicture}
    \draw[fill=green] (0,0) -- (2,0) -- (0,2) -- cycle;
    \foreach \x\y in {0/0, 2/0, 0/2}{
      \filldraw (\x,\y) circle(0.04);
    }
    \foreach \x\y\nx\ny in {1/1/.3536/.3536,0/1/-.9113/1.070,1/0/.6010/-1.1971}{
      \draw[->] (\x, \y) -- (\x+0.2*\nx, \y+0.2*\ny);
      }
    \draw[fill=green] (4.0, 1.5) -- (5.5, 2.0) -- (4.8, 2.7) -- cycle;
    \foreach \x\y in {4/1.5, 5.5/2, 4.8/2.7}{
      \filldraw (\x,\y) circle(.04);
    }
    \foreach \x\y\nx\ny in {5.15/2.35/.707/.707,4.4/2.1/-0.8321/0.5547,4.75/1.75/.3163/-.9487}{
      \draw[->] (\x,\y) -- (\x+0.2*\nx,\y+0.2*\ny);
      }
    \draw[<-] (1.2, 1.2) to[bend left] (4.0, 2.2);
    \node at (2.7, 2.65) {$F_*$};
    \end{tikzpicture}
  \end{center}
  \caption{Pushing forward the Morley derivative nodes in physical
    space does \emph{not} produce the reference derivative nodes.}
\label{fig:morleypushforward}
\end{figure}
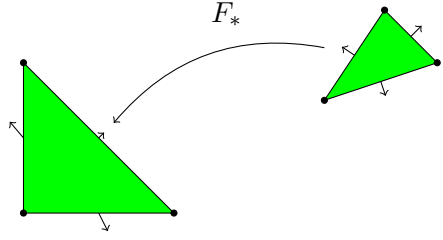

To work around this, we introduce the following idea:
\begin{definition}
  Let $(K,P,N)$ and $(\hat{K}, \hat{P}, \hat{N})$ be
  finite elements of class $C^k$ with affine mapping $F:K\rightarrow
  \hat{K}$ and associated pull-back and push-forward $F^*$ and $F_*$.
  Suppose also that \( F^*(\hat{P}) = P \).
  Let
  \(N^c = \left\{n^c_i\right\}_{i=1}^\mu \subset C_b^k(K)^\prime \)
  and
  \( \hat{N}^c = \left\{ \hat{n}^n_i \right\}_{i=1}^\mu \subset
  C^k(\hat{K})^\prime \) be
  such that
  \begin{itemize}
  \item \( N \subset N^c \) (taken as sets rather than vectors),
  \item \( \hat{N} \subset \hat{N}^c \) (again as sets),
  \item \( \mathrm{span}(F_*(N^c)) = \mathrm{span}(\hat{N}^c) \) in
    \( C^k(\hat{K})^\prime \).
  \end{itemize}
  Then \( N^c \) and \( \hat{N}^c \) form a 
  \emph{compatible nodal completion} of \( N \) and \( \hat{N} \).
\end{definition}

\begin{example}
  Let $(K,P,N)$ and $(\hat{K}, \hat{P}, \hat{N})$ be the
  Morley triangle and reference triangle.  Take $N^c$ to
  contain all the nodes of $N$ together with the tangential derivatives at
  the midpoint of each edge of $K$ and similarly for $\hat{N}^c$.
  In this case, $\mu = 9$.  Then, both $N^c$ and $\hat{N}^c$
  contain complete gradients at each edge midpoint and function values
  at each vertex.  The push-forward of $N^c$ has the same
  span as $\hat{N}^c$ and so $N^c$ and $\hat{N}^c$ form a
  compatible nodal completion of \( N \) and \( \hat{N} \).  This is shown
  pictorially in Figure~\ref{fig:morleybridge}.
\end{example}

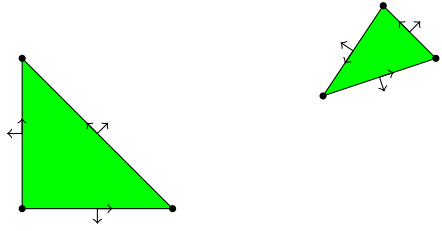
\begin{figure}
    \begin{center}
  \begin{tikzpicture}
    \draw[fill=green] (0,0) -- (2,0) -- (0,2) -- cycle;
    \foreach \x\y in {0/0, 2/0, 0/2}{
      \filldraw (\x,\y) circle(0.04);
    }
    \foreach \x\y\nx\ny\tx\ty in {1/1/.7071/.7071/-.7071/.7071,
      0/1/-1/0/0/1,
      1/0/0/-1/1/0}
    {
      \draw[->] (\x, \y) -- (\x+0.2*\nx, \y+0.2*\ny);
      \draw[->] (\x, \y) -- (\x+0.2*\tx, \y+0.2*\ty);
      }
    \draw[fill=green] (4.0, 1.5) -- (5.5, 2.0) -- (4.8, 2.7) -- cycle;
    \foreach \x\y in {4/1.5, 5.5/2, 4.8/2.7}{
      \filldraw (\x,\y) circle(.04);
    }
    \foreach \x\y\nx\ny\tx\ty in {5.15/2.35/.707/.707/-.707/.707,4.4/2.1/-0.8321/0.5547/-.5547/-.8321,4.75/1.75/.3163/-.9487/.9487/.3163}{
      \draw[->] (\x,\y) -- (\x+0.2*\nx,\y+0.2*\ny);
      \draw[->] (\x,\y) -- (\x+0.2*\tx,\y+0.2*\ty);
      }
    \end{tikzpicture}
  \end{center}
  \caption{Nodal sets \( \hat{N}^c \) and \( N^c \) giving the
    compatible nodal completion of $N$ and $\hat{N}$ for a Morley
    element and reference element are formed by including tangential
    derivatives along with normal derivatives at each edge midpoint.}
\label{fig:morleybridge}
\end{figure}

A similar completion -- supplementing the nodes with tangential
derivatives at edge midpoints -- exists for the Argyris nodes and
reference nodes~\citep{dominguez2008algorithm}.

Now, since the spans of $\hat{N}^c$ and $F_*(N^c)$ agree (even in
\( C_b^k(\hat{K})^\prime \)), there exists a $\mu \times \mu$ matrix
$V^c$, typically block diagonal, such that
\begin{equation}
  \label{eq:vc}
  \hat{N}^c = V^c F_*(N^c).
\end{equation}
Let \( E \in \mathbb{R}^{\nu \times \mu} \) be the
Boolean matrix with \( E_{ij} = 1 \) iff \( \hat{n}_i = \hat{n}_j^c
\) so that
\begin{equation}
  \label{eq:e}
  \hat{N} = E \hat{N}^c,
\end{equation}
and it is clear that
\begin{equation}
  \hat{N} = E V^c F_*(N^c).
\end{equation}
That is, the reference nodes are linear combinations of the
pushed-forward nodes \emph{and} the extended nodes,
but we must have the linear combination in terms of the pushed-forward
nodes alone.

Recall that building the nodal basis only requires the action of the
nodes on the polynomial space. 
Because \( \mu > \nu \), the set of nodes \( \pi N^c \) must be
linearly dependent. 
So, we seek a matrix \( D \in \mathbb{R}^{\mu \times \nu} \) such that
\begin{equation}
  \label{eq:d}
  \pi N^c = D \pi N.
\end{equation}
Since \( F_* \) is an isomorphism, such a \( D \) also gives
\begin{equation}
  \hat{\pi} F_*(N^c) = D \hat{\pi} F_*(N).
\end{equation}

Rows \( i \) of the matrix \( D \) such that \( n^c_i = n_j \) for 
some \( j \) will just have \( D_{ik} = \delta_{kj} \) for \( 1 \leq k
\leq \nu \).  The remaining rows must be constructed somehow via
an interpolation argument, although the details will vary by element.

This discussion suggests a three-stage process, each encoded by matrix
multiplication, for converting the push-forwards of the physical nodes
to the reference nodes, hence giving a factored form of $V$
in~\eqref{eq:V}.  Before working examples, we summarize this in the
following theorem:
\begin{theorem}
  Let $(K,P,N)$ and $(\hat{K}, \hat{P}, \hat{N})$ be
  finite elements with affine mapping $F:K\rightarrow \hat{K}$ and
  suppose that $F^*(\hat{P}) = P$.  Let $N^c$ and $\hat{N}^c$
  be a compatible nodal completion of \( N \) and \( \hat{N} \).
  Then given matrices $E \in \mathbb{R}^{\nu \times \mu}$ from~\eqref{eq:e},
  $V^c \in \mathbb{R}^{\mu \times \mu}$ from~\eqref{eq:vc}
  and $D \in \mathbb{R}^{\mu \times \nu}$ from~\eqref{eq:d} that
  builds the (restrictions of) the extended nodes out of the given
  physical nodes, the nodal transformation matrix $V$ satisfies
  \begin{equation}
    V = E V^C D.
  \end{equation}
\end{theorem}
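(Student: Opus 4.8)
The plan is to chain together the three defining relations~\eqref{eq:e}, \eqref{eq:vc}, and~\eqref{eq:d}, and then invoke Theorem~\ref{thm:MVt} (or rather its internal observation that $V = B^{-1}$ where $B = F_*(N)(\hat{\Psi})$) to identify $V$. First I would start from~\eqref{eq:e}, writing $\hat{N} = E\hat{N}^c$, substitute~\eqref{eq:vc} to get $\hat{N} = E V^c F_*(N^c)$, and then observe that since we only ever apply these functionals to members of $\hat{P}$ (equivalently, $P$, via $F^*$), we may replace $\hat{N}^c$, $N^c$ throughout by their restrictions. Applying $\hat{\pi}$ and using that $F_*$ is an isomorphism commuting with restriction (the vectorized relation $\hat{\pi}F_*(N^c) = D\,\hat{\pi}F_*(N)$ from~\eqref{eq:d}), I get $\hat{\pi}\hat{N} = E V^c D\,\hat{\pi}F_*(N) = E V^c D\, F_*(\pi N)$.

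Then I would compare this with the defining equation~\eqref{eq:V} for $V$, namely $\hat{\pi}\hat{N} = V F_*(\pi N)$. Both $E V^c D$ and $V$ are $\nu\times\nu$ matrices expressing $\hat{\pi}\hat{N}$ in terms of the basis $F_*(\pi N)$ of $\hat{P}^\prime$ (recall from the discussion preceding~\eqref{eq:V} that $F_*(\pi N)$ is indeed a basis by Proposition~\ref{prop:iso}). Since the expansion of a fixed vector of functionals in a fixed basis is unique, we conclude $V = E V^c D$. Alternatively, and perhaps more robustly, I would run the argument through the matrix $B$: testing both sides of $\hat{\pi}\hat{N} = E V^c D\, F_*(\pi N)$ against the reference nodal basis $\hat{\Psi}$ gives $I = \hat{N}(\hat{\Psi}) = E V^c D\, F_*(N)(\hat{\Psi}) = E V^c D\, B$, so $E V^c D = B^{-1} = V$ by the proof of Theorem~\ref{thm:MVt}.

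One subtlety worth stating carefully is that the matrices $E$, $V^c$, $D$ individually describe relations among the \emph{larger}, linearly dependent collections $N^c$, $\hat{N}^c$ (which live in the ambient dual and, restricted to $P$, span but do not form a basis), yet the product $EV^cD$ lands back among the genuine bases; the argument must respect that only after multiplying all three does one recover a well-defined change-of-basis matrix. In particular $D$ is only well-defined up to the kernel of the restriction map on $N^c$, but composing with $E$ on the left kills this ambiguity — and I would remark that this is why the statement asserts $V = EV^cD$ rather than any relation among partial products.

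The main obstacle is not any hard computation but the bookkeeping of restrictions: one must be consistent about when a functional equation holds in $C_b^k$ versus only on $P$ (or $\hat P$), and verify that $F_*$, restriction, and the Boolean selection $E$ all commute appropriately so that the substitutions are legitimate. The cleanest route is to push everything to act on the finite-dimensional spaces immediately and then use the uniqueness-of-coordinates argument, or equivalently the $B$-matrix bridge of Theorem~\ref{thm:MVt}.
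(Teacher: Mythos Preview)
Your proposal is correct and follows essentially the same route as the paper: the paper does not give a separate proof after the theorem statement, but the theorem is explicitly presented as a summary of the preceding discussion, which chains together~\eqref{eq:e}, \eqref{eq:vc}, and~\eqref{eq:d} (after passing to restrictions) exactly as you do, and then identifies the result with $V$ via~\eqref{eq:V}. Your alternative route through $B = F_*(N)(\hat\Psi)$ and Theorem~\ref{thm:MVt} is a nice concrete verification and is fully consistent with the paper's framework.

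One small inaccuracy in your commentary: you write that $D$ is ``only well-defined up to the kernel of the restriction map on $N^c$''. In fact, since $\pi N$ is a \emph{basis} of $P'$ (this is part of the definition of a finite element), each $\pi n_i^c$ has a unique expansion in $\pi N$, so $D$ is uniquely determined by~\eqref{eq:d}. There is no ambiguity to be killed by $E$. This does not affect the validity of your main argument.
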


This gives a general outline for mapping finite elements, and
we illustrate now by turning to the Morley element.

\subsubsection{The Morley element}
Following our earlier notation for the geometry and nodes, we order
the nodes of a Morley triangle by
\begin{equation}
  \label{eq:morleyN}
  N =
   \begin{bmatrix} \delta_{\mathbf{v}_1} & \delta_{\mathbf{v}_2}
    & \delta_{\mathbf{v}_3} &
    \delta^{\mathbf{n}_1}_{\mathbf{e}_1} &
    \delta^{\mathbf{n}_2}_{\mathbf{e}_2} &
    \delta^{\mathbf{n}_3}_{\mathbf{e}_3}
  \end{bmatrix}^T
\end{equation}

Nodes $N^C$ will also include tangential derivatives
at the edge midpoint.  We put
\begin{equation}
  \label{eq:morleyNc}
  N^c
  = \begin{bmatrix} \delta_{\mathbf{v}_1} & \delta_{\mathbf{v}_2}
    & \delta_{\mathbf{v}_3} &
    (\nabla^{\mathbf{n}_1\mathbf{t}_1}_{\mathbf{e}_1})^T &
    (\nabla^{\mathbf{n}_2\mathbf{t}_2}_{\mathbf{e}_2})^T &
    (\nabla^{\mathbf{n}_3\mathbf{t}_3}_{\mathbf{e}_3})^T
  \end{bmatrix}^T,
\end{equation}
Again, this is a block vector the last three entries each consist of
two values.  We give the same
ordering of reference element nodes \( \hat{N} \) and \( \hat{N}^c \).

The matrix $E$ simply extracts the members of $N^C$ that are also in
$N$, so with $\eta = \begin{bmatrix} 1 & 0 \end{bmatrix}$, we have the
block matrix
\begin{equation}
  E = \begin{bmatrix}
    1 & 0 & 0 & 0 & 0 & 0 \\
    0 & 1 & 0 & 0 & 0 & 0 \\
    0 & 0 & 1 & 0 & 0 & 0 \\
    0 & 0 & 0 & \eta & 0 & 0 \\
    0 & 0 & 0 & 0 & \eta & 0 \\
    0 & 0 & 0 & 0 & 0 & \eta
  \end{bmatrix}.
\end{equation}

Because the gradient nodes in \( N^c \) use normal and tangential
coordinates, \( V^c \) will be slightly more more complicated than \(
V \) for the Hermite element.
For local edge \( \gamma_i \), we define the (orthogonal) matrix
\[
G_i = \begin{bmatrix} \mathbf{n}_i & \mathbf{t}_i \end{bmatrix}^T
\]
with the normal and tangent vector in the rows.  Similarly, we let
\[
\hat{G}_i = \begin{bmatrix} \hat{\mathbf{n}}_i & \hat{\mathbf{t}}_i \end{bmatrix}^T
\]
contain the unit normal and tangent to edge \( \hat{\gamma}_i \) of the reference cell
\( \hat{K} \).  It is clear that
\begin{equation}
  F_*(\nabla^{\mathbf{n}_i \mathbf{t}_i}_{\mathbf{e}_i})
  = F_*(G_i \nabla_{\mathbf{e}_i}) = G_i F_*(\nabla_{\mathbf{e}_i})
  = G_i J^T \hat{\nabla}_{\mathbf{e}_i}
  = G_i J^T \hat{G}_i^T \hat{\nabla}^{\hat{\mathbf{n}}_i\hat{\mathbf{t}}_i}_{\hat{\mathbf{e}}_i},
\end{equation}
so, defining
\begin{equation}
  \label{eq:B}
  B^i = (G_i J^T \hat{G}_i^T)^{-1} = \hat{G}_i J^{-T} G_i^T,
\end{equation}
we have that
\begin{equation}
  V^C =
  \begin{bmatrix}
    1 & 0 & 0 & 0 & 0 & 0 \\
    0 & 1 & 0 & 0 & 0 & 0 \\
    0 & 0 & 1 & 0 & 0 & 0 \\
    0 & 0 & 0 & B^1 & 0 & 0 \\
    0 & 0 & 0 & 0 & B^2 & 0 \\
    0 & 0 & 0 & 0 & 0 & B^3 \\
  \end{bmatrix}.
\end{equation}

Now, we turn to the matrix \( D \in \mathbb{R}^{9 \times 6} \),
writing members of \( \pi N^c \) in terms of \( \pi N \) alone.  The challenge
is to express the tangential derivative nodes in terms of the
remaining six nodes -- vertex values and normal derivatives.  In fact,
only the vertex values are needed.  Along any edge, any member of $P$
is just a univariate quadratic polynomial, and so the tangential
derivative is linear.  Linear functions attain their
average value over an interval at its midpoint.  But the average value of
the derivative over the edge is just the difference between vertex
values divided by the edge length.  The matrix $D$ must be
\begin{equation}
  D = \begin{bmatrix}
    1 & 0 & 0 & 0 & 0 & 0 \\
    0 & 1 & 0 & 0 & 0 & 0 \\
    0 & 0 & 1 & 0 & 0 & 0 \\
    0 & 0 & 0 & 1 & 0 & 0 \\
    0 & -\ell_1^{-1} & \ell_1^{-1} & 0 & 0 & 0 \\
    0 & 0 & 0 & 0 & 1 & 0 \\
    -\ell_2^{-1} & 0 & \ell_2^{-1} & 0 & 0 & 0 \\
    0 & 0 & 0 & 0 & 0 & 1 \\
    -\ell_3^{-1} & \ell_3^{-1} & 0 & 0 & 0 & 0 \\
    \end{bmatrix}
\end{equation}
We can also arrive at this formulation of \( D \) in another way, that
sets up the discussion used for Argyris and later Bell elements.
Consider the following univariate result:
\begin{proposition}
  Let \( p(x) \) any quadratic polynomial on \(
  [-1, 1] \).  Then
  \begin{equation}
    p^\prime(0) = \tfrac{1}{2} \left( p(1) - p(-1) \right)
  \end{equation}
\end{proposition}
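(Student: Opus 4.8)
The statement is a simple exact formula for quadratic polynomials.

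The plan is to verify the identity $p'(0) = \tfrac{1}{2}(p(1) - p(-1))$ directly by expanding a general quadratic in the monomial basis. Writing $p(x) = a + bx + cx^2$ for constants $a, b, c \in \mathbb{R}$, I would compute the three quantities appearing in the claim: $p'(x) = b + 2cx$ so $p'(0) = b$; $p(1) = a + b + c$; and $p(-1) = a - b + c$. Subtracting gives $p(1) - p(-1) = 2b$, and halving yields $\tfrac{1}{2}(p(1) - p(-1)) = b = p'(0)$, which is exactly the assertion.

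An alternative, slightly more conceptual route would emphasize why only the linear coefficient survives: the map $p \mapsto p(1) - p(-1)$ annihilates both the constant function and the even function $x^2$, and scales $x$ by a factor of $2$; likewise $p \mapsto p'(0)$ annihilates constants and $x^2$ (the latter because its derivative $2x$ vanishes at $0$) and sends $x$ to $1$. Since both functionals are linear on the three-dimensional space of quadratics and agree on the basis $\{1, x, x^2\}$, they agree everywhere, up to the factor $\tfrac12$. This reformulation makes transparent the connection to the matrix $D$ above: the tangential derivative at an edge midpoint, which is the derivative of a univariate quadratic at the center of the edge, can be recovered purely from the two endpoint (vertex) values, with no dependence on any interior or normal data.

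There is essentially no obstacle here: the result is a one-line computation. The only thing worth stating carefully is the reduction to the univariate setting — namely that the restriction of any $p \in P = P_2(K)$ to an edge $\gamma_i$ is a univariate quadratic in the arc-length parameter, so that after affinely rescaling that parameter to $[-1,1]$ (sending the midpoint $\mathbf{e}_i$ to $0$ and the endpoints to $\pm 1$) the proposition applies and accounts for the $\ell_i^{-1}$ scaling factors seen in $D$. I would therefore present the monomial-basis calculation as the proof proper and append a remark on the linearity/basis viewpoint to motivate the appearance of the same structure in the Argyris and Bell constructions to follow.
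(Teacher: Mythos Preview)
Your proposal is correct and takes essentially the same approach as the paper: both write \(p(x)=a+bx+cx^2\), compute \(p'(0)=b\), \(p(1)=a+b+c\), \(p(-1)=a-b+c\), and read off the identity. The only cosmetic difference is that the paper phrases the last step as solving a \(2\times 2\) linear system for the unknown coefficients \(d_1,d_{-1}\) in \(p'(0)=d_1 p(1)+d_{-1}p(-1)\), whereas you verify the stated formula directly by subtraction.
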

\begin{proof}
  Write \( p(x) = a + b x + c x^2 \).  Then
  \( p^\prime(x) = b + 2 c x \) so that \( p^\prime(0) = b \).
  Also note that
  \( p(1) = a + b + c \) and \( p(-1) = a - b + c \).  Wanting to
  write \( p^\prime(0) = d_1 p(1) + d_{-1} p(-1) \) for constants \(
  d_1 \) and \( d_{-1} \) leads to a \( 2 \times 2 \) linear system,
  which is readily solved to give \( d_1 = -d_{-1} = \tfrac{1}{2} \).
\end{proof}
Then, by a change of variables, this rule can be mapped to
\( \left[ -\tfrac{\ell}{2}, \tfrac{\ell}{2} \right] \) so that
\[
p^\prime(0) = \tfrac{1}{\ell} \left( p(\tfrac{\ell}{2}) - p(-\tfrac{\ell}{2}) \right).
\]
Finally, one can apply this rule on the edge of a triangle running
from \( \mathbf{v}_a \) to \( \mathbf{v}_b \) to find that
\[
\pi \delta^{\mathbf{t}_i} = \tfrac{\ell}{2}
\left( \pi \delta_{\mathbf{v}_b} - \pi \delta_{\mathbf{v}_a} \right).
\]

It is interesting to explicitly compute the product
\(V = E V^C D\), as giving a single formula rather than product of
matrices is more useful in practice.  Multiplying through gives:
\begin{equation}
  V =
  \begin{bmatrix}
    1 & 0 & 0 & 0 & 0 & 0 \\
    0 & 1 & 0 & 0 & 0 & 0 \\
    0 & 0 & 1 & 0 & 0 & 0 \\
    0 & \tfrac{-B^1_{12}}{\ell_1} & \tfrac{B^1_{12}}{\ell_1} & B^1_{11}
    & 0 & 0 \\
    \tfrac{-B^2_{12}}{\ell_2} & 0 & \tfrac{B^2_{12}}{\ell_2} & 0 &
    B^2_{11} & 0 \\
    \tfrac{-B^3_{12}}{\ell_3} & \tfrac{B^3_{12}}{\ell_3} & 0 & 0 & 0 & B^3_{11}
    \end{bmatrix}
\end{equation}

From the definition of $B^i$, it is possibly to explicitly calculate
its entries in terms of the those of the Jacobian and the normal and
tangent vectors for $K$ and $\hat{K}$.  Only the first row of each
$B^i$ is needed
\begin{equation}
  \begin{split}
    B^i_{11} & = \hat{n}^{\mathbf{x}}_i
    \left( n^{\mathbf{x}}_i \tfrac{\partial x}{\partial \hat{x}}
    + t^{\mathbf{x}}_i \tfrac{\partial y}{\partial \hat{x}} \right)
    + \hat{t}^{\mathbf{x}}_i
    \left( n^{\mathbf{x}}_i \tfrac{\partial x}{\partial \hat{y}}
    + t^{\mathbf{x}}_i \tfrac{\partial y}{\partial \hat{y}} \right) \\
    B^i_{12} & = \hat{n}^{\mathbf{x}}_i
    \left( n^{\mathbf{y}}_i \tfrac{\partial x}{\partial \hat{x}}
    + t^{\mathbf{y}}_i \tfrac{\partial y}{\partial \hat{x}} \right)
    + \hat{t}^{\mathbf{x}}_i
    \left( n^{\mathbf{y}}_i \tfrac{\partial x}{\partial \hat{y}}
    + t^{\mathbf{y}}_i \tfrac{\partial y}{\partial \hat{y}} \right) \\
  \end{split}
\end{equation}
We can also recall that the normal and tangent vectors are
related by $n^{\mathbf{x}} = t^{\mathbf{y}}$ and $n^{\mathbf{y}} = -t^{\mathbf{x}}$ to express these entries
purely in terms of either the normal or tangent vectors.  
Each entry of the Jacobian and normal and tangent vectors of $K$ and
$\hat{K}$ enter into the transformation.

In this form, \( V \) has 12 nonzero entries, although the
formation of those entries, which depend on normal and tangent vectors
and the Jacobian, from the vertex coordinates requires an
additional amount of arithmetic.  The Jacobian will typically be
computed anyway in a typical code,
and the cost of working with \( M =
V^T \) will again be subdominant to the nested loops over basis
functions and quadrature points required to form element matrices,
much like Hermite.

\subsubsection{The Argyris element}
Because it is higher degree than Morley and contains second
derivatives among the nodes, the Argyris transformation is more
involved.  However, it is a prime motivating example and also demonstrates that 
the general theory here reproduces the specific technique
in~\cite{dominguez2008algorithm}.
The classical Argyris element has $P$ as polynomials of degree 5 over
a triangle \( K \), a 21-dimensional space.
The 21 associated nodes $N$ are
selected as the point values, gradients, and all three unique second derivatives at
the vertices together with the normal derivatives evaluated at edge
midpoints.  These nodal choices lead to a proper $C^1$
element, and $C^2$ continuity is obtained at vertices.

Since the Argyris elements do not form an
affine-interpolation equivalent family, we will need to embed the
physical nodes into a larger set.  Much as with Morley elements, the
edge normal derivatives will be augmented by the tangential
derivatives.

With this notation, $N$ is a vector of 21 functionals and $N^C$ a
vector of 24 functions written as
\begin{equation}
  \begin{split}
  N & = \left[ \begin{array}{cccccccccccc}
    \delta_{\mathbf{v}_1} &
    \nabla_{\mathbf{v}_1} &
    \bigtriangleup_{\mathbf{v}_1} &
    \delta_{\mathbf{v}_2} &
    \nabla_{\mathbf{v}_2} &
    \bigtriangleup_{\mathbf{v}_2} &
    \delta_{\mathbf{v}_3} &
    \nabla_{\mathbf{v}_3} &
    \bigtriangleup_{\mathbf{v}_3} &
    \delta^{\mathbf{n}_1}_{\mathbf{e}_1} &
    \delta^{\mathbf{n}_2}_{\mathbf{e}_2} &
    \delta^{\mathbf{n}_3}_{\mathbf{e}_3}
  \end{array} \right]^T,
  \\
  N^C &=
  \left[\begin{array}{cccccccccccc}
    \delta_{\mathbf{v}_1} &
    \nabla_{\mathbf{v}_1} &
    \bigtriangleup_{\mathbf{v}_1} &
    \delta_{\mathbf{v}_2} &
    \nabla_{\mathbf{v}_2} &
    \bigtriangleup_{\mathbf{v}_2} &
    \delta_{\mathbf{v}_3} &
    \nabla_{\mathbf{v}_3} &
    \bigtriangleup_{\mathbf{v}_3} &
    \nabla^{\mathbf{n_1t_1}}_{\mathbf{v}_1} &
    \nabla^{\mathbf{n_2t_2}}_{\mathbf{v}_2} &
    \nabla^{\mathbf{n_3t_3}}_{\mathbf{v}_3}
    \end{array} \right]^T,
  \end{split}
\end{equation}
with corresponding ordering of reference nodes \( \hat{N} \) and \(
\hat{N}^c \).
The \(21 \times 24\) matrix $E$ just selects out the items in
$N^C$ that are also in $N$, so that
\[
E_{ij} = 
\begin{cases}
  1, & \text{for } 1 \leq i=j \leq 19 \ \text{or } (i,j) \in \left\{(20, 21), (21, 23)\right\} \\
  0, & \text{otherwise.}
\end{cases}
\]

The matrix $V^C$ relating the push-forward of the extended nodes to
the extended reference nodes is block diagonal and similar to our
earlier examples.  We use~\eqref{eq:justthechainrule} to map
the vertex gradient nodes as in the Hermite case.  Mapping the three unique
second derivatives by the chain rule requires the matrix:
\begin{equation}
  \Theta
  = \begin{bmatrix}
    \left( \tfrac{\partial \hat{x}}{\partial x} \right)^2
    & 2 \tfrac{\partial \hat{x}}{\partial x} \tfrac{\partial
      \hat{y}}{\partial x}
    & \left( \tfrac{\partial \hat{y}}{\partial x} \right)^2 \\
    \tfrac{\partial \hat{x}}{\partial y}
    \tfrac{\partial \hat{x}}{\partial x}
    &
    \tfrac{\partial{\hat{x}}}{\partial y}
    \tfrac{\partial\hat{y}}{\partial x}
    + \tfrac{\partial \hat{x}}{\partial x}
    \tfrac{\partial \hat{y}}{\partial y}
    & \tfrac{\partial \hat{y}}{\partial x} \tfrac{\partial
      \hat{y}}{\partial y} \\
    \left(\tfrac{\partial \hat{x}}{\partial y} \right)^2
    & 2 \tfrac{\partial \hat{x}}{\partial y} \tfrac{\partial
      \hat{y}}{\partial y}
    & \left( \tfrac{\partial \hat{y}}{\partial y} \right)^2
    \end{bmatrix}
\end{equation}
The edge midpoint nodes transform by $B$ just as
in~\eqref{eq:B}, so that the $V^C$ is
\begin{equation}
  \label{eq:VC}
  V^C =
  \left[
    \begin{array}{cccccccccccc}
    1 & 0 & 0 & 0 & 0 & 0 & 0 & 0 & 0 & 0 & 0 & 0 \\
    0 & J^{-T} & 0 & 0 & 0 & 0 & 0 & 0 & 0 & 0 & 0 & 0 \\
    0 & 0 & \Theta^{-1} & 0 & 0 & 0 & 0 & 0 & 0 & 0 & 0 & 0 \\
    0 & 0 & 0 & 1 & 0 & 0 & 0 & 0 & 0 & 0 & 0 & 0 \\
    0 & 0 & 0 & 0 & J^{-T} & 0 & 0 & 0 & 0 & 0 & 0 & 0\\
    0 & 0 & 0 & 0 & 0 & \Theta^{-1} & 0 & 0 & 0 & 0 & 0 & 0 \\
    0 & 0 & 0 & 0 & 0 & 0 & 1 & 0 & 0 & 0 & 0 & 0 \\
    0 & 0 & 0 & 0 & 0 & 0 & 0 & J^{-T} & 0 & 0 & 0 & 0 \\
    0 & 0 & 0 & 0 & 0 & 0 & 0 & 0 & \Theta^{-1} & 0 & 0 & 0 \\
    0 & 0 & 0 & 0 & 0 & 0 & 0 & 0 & 0 & B^1 & 0 & 0 \\
    0 & 0 & 0 & 0 & 0 & 0 & 0 & 0 & 0 & 0 & B^2 & 0 \\
    0 & 0 & 0 & 0 & 0 & 0 & 0 & 0 & 0 & 0 & 0 & B^3 \\
    \end{array}
    \right].
\end{equation}

Constructing $D$, like for Morley, is slightly more delicate.  The
additional nodes acting on quintic polynomials -- tangential
derivatives at edge midpoints -- must be written in terms of the
remaining nodes.  The first aspect of this involves a univariate
interpolation-theoretic question.  On the biunit interval $[-1, 1]$,
we seek a rule of the form
\[
f'(0) \approx a_1 f(-1) + a_2 f(1) + a_3 f^\prime(-1) + a_4
f^\prime(1) + a_5 f^{\prime\prime}(-1) + a_6 f^{\prime\prime}(1)
\]
that is exact when $f$ is a quintic polynomial.  The coefficients may
be determined to by writing a $6\times 6$ linear system asserting
correctness on the monomial basis.  The answer, given in~\cite{dominguez2008algorithm}, is
that
\begin{proposition}
  Any quintic polynomial \( p \) defined on \( [-1, 1] \) satisfies
\begin{equation}
p^\prime(0) = \tfrac{15}{16} \left( p(1) - p(-1) \right)
- \tfrac{7}{16} \left( p^\prime(1) + p^\prime(-1) \right)
+ \tfrac{1}{16} \left( p^{\prime\prime}(1) - p^{\prime\prime}(-1) \right).
\end{equation}
\end{proposition}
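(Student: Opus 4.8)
The plan is to determine the six coefficients $a_1,\dots,a_6$ by imposing exactness on a basis of the space of quintic polynomials and then verifying that the claimed values solve the resulting linear system. Since $P_5([-1,1])$ is six-dimensional and we have six unknowns, this is a square system; the monomials $1, x, x^2, x^3, x^4, x^5$ are the natural basis. For each monomial $x^j$ I would record the three data values at $x=-1$ and $x=1$ that appear on the right-hand side, namely the value, first derivative, and second derivative, together with the target quantity $\left.\tfrac{d}{dx}x^j\right|_{x=0}$, which is $1$ when $j=1$ and $0$ otherwise.

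First I would set up the $6\times 6$ matrix whose columns are indexed by $(p(-1), p(1), p'(-1), p'(1), p''(-1), p''(1))$ and whose rows correspond to the monomials. For example, the $x^5$ row contributes $(-1, 1, 5, 5, -20, 20)$ with target $0$, the $x^4$ row contributes $(1, 1, -4, 4, 12, 12)$ with target $0$, and so on down to the constant row $(1,1,0,0,0,0)$ with target $0$ and the linear row $(-1,1,1,1,0,0)$ with target $1$. Then I would exhibit the candidate solution $(a_1,\dots,a_6) = \left(-\tfrac{15}{16}, \tfrac{15}{16}, -\tfrac{7}{16}, -\tfrac{7}{16}, -\tfrac{1}{16}, \tfrac{1}{16}\right)$ read off from the proposition's formula, and check that it annihilates the constant, quadratic, and quartic rows (the even monomials, where the antisymmetric structure of the coefficients forces cancellation) and reproduces the correct value $0$ or $1$ on the linear, cubic, and quintic rows. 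Because the $a_i$ are antisymmetric under swapping $\pm 1$ in the value and second-derivative slots and symmetric in the first-derivative slot, it is cleanest to split the verification into the even-monomial case (automatically zero) and the three odd monomials, where only a handful of arithmetic identities remain.

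Alternatively — and this is probably the slicker writeup — I would invoke linearity directly: since both sides of the claimed identity are linear functionals on $P_5([-1,1])$, it suffices to check equality on the six monomials, and by the odd/even symmetry only $x, x^3, x^5$ need genuine checking. For $p(x)=x$ both sides equal $1$; for $p(x)=x^3$ the right side is $\tfrac{15}{16}(1-(-1)) - \tfrac{7}{16}(3+3) + \tfrac{1}{16}(6-(-6)) = \tfrac{30}{16} - \tfrac{42}{16} + \tfrac{12}{16} = 0 = p'(0)$; and for $p(x)=x^5$ the right side is $\tfrac{15}{16}(2) - \tfrac{7}{16}(10) + \tfrac{1}{16}(40) = \tfrac{30 - 70 + 40}{16} = 0 = p'(0)$. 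The even monomials give $0$ on both sides trivially by symmetry. I do not expect a genuine obstacle here — the only thing to be careful about is bookkeeping the signs of the second derivatives of odd monomials at $-1$ (so that the $p''(1) - p''(-1)$ combination does not accidentally get a sign flip), and making sure the formula is being tested against the full six-dimensional space rather than a proper subspace.
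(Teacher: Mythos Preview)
Your proposal is correct and follows essentially the same approach the paper sketches: set up a $6\times 6$ linear system by imposing exactness on the monomial basis $1,x,\dots,x^5$, then verify that the stated coefficients solve it. Your symmetry observation reducing the check to the three odd monomials is a tidy refinement of that same idea, and your arithmetic on $x$, $x^3$, $x^5$ checks out.
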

This can be mapped to the interval $[-\tfrac{\ell}{2}, \tfrac{\ell}{2}]$
by a change of variables:
\begin{equation}
  p^\prime(0) = \tfrac{15}{8\ell} \left( p\left(\tfrac{\ell}{2}\right)
  - p\left(\tfrac{-\ell}{2}\right) \right)
- \tfrac{7}{16} \left( p^\prime\left(\tfrac{\ell}{2}\right) + p^\prime\left(\tfrac{-\ell}{2}\right)\right)
+ \tfrac{\ell}{32} \left( p^{\prime\prime}\left(\tfrac{\ell}{2}\right) - p^{\prime\prime}\left(\tfrac{-\ell}{2}\right) \right).
\end{equation}

Now, we can use this to compute the tangential derivative at an edge
midpoint, expanding the tangential first and second derivatives in
terms of the Cartesian derivatives.  If $\mathbf{v}_a$ and
$\mathbf{v}_b$ are the beginning and ending vertex of edge \(\gamma_i\) with
midpoint $\mathbf{e}_i$ and length $\ell_i$, we write the tangential
derivative acting on quintics as
\begin{equation}
\begin{split}
\pi \delta^{\mathbf{t}_i}_{\mathbf{e}_i} = &
\tfrac{15}{8\ell_i}
\left(
\delta_{\mathbf{v}_b} - \delta_{\mathbf{v}_a}
\right)
- \tfrac{7}{16}
\left(
t^{\mathbf{x}}_i
\left( \delta^{\mathbf{x}}_{\mathbf{v}_b}
+ \delta^{\mathbf{x}}_{\mathbf{v}_a} \right)
+ 
t^{\mathbf{y}}_i
\left(
\delta^{\mathbf{y}}_{\mathbf{v}_b}
+ \delta^{\mathbf{y}}_{\mathbf{v}_a} \right)
\right)
\\
& + \tfrac{\ell_i}{32}
\left(
(t_i^{\mathbf{x}})^2 \left( \delta^{\mathbf{xx}}_{\mathbf{v}_b}
- \delta^{\mathbf{xx}}_{\mathbf{v}_a} \right)
+ 2 t_i^{\mathbf{x}} t_i^{\mathbf{y}} \left( \delta^{\mathbf{xy}}_{\mathbf{v}_b}
- \delta^{\mathbf{xy}}_{\mathbf{v}_a} \right)
+(t_i^{\mathbf{y}})^2 \left( \delta^{\mathbf{yy}}_{\mathbf{v}_b}
- \delta^{\mathbf{yy}}_{\mathbf{v}_a} \right)
\right).
\end{split}
\end{equation}

For each edge \( \gamma_i \), define the vector \( \mathbf{\tau}_i\) by
\[
\mathbf{\tau}_i = \begin{bmatrix} (t^{\mathbf{x}}_i)^2 & 2 t^{\mathbf{x}}_i t^{\mathbf{y}}_i & (t^{\mathbf{y}}_i)^2 \end{bmatrix}^T.
\]

The end result is that
\begin{equation}
  D = \left[
    \begin{array}{cccccccccccc}
      1 & 0 & 0 & 0 & 0 & 0 & 0 & 0 & 0 & 0 & 0 & 0 \\
      0 & I_2 & 0 & 0 & 0 & 0 & 0 & 0 & 0 & 0 & 0 & 0 \\
      0 & 0 & I_3 & 0 & 0 & 0 & 0 & 0 & 0 & 0 & 0 & 0 \\
      0 & 0 & 0 & 1 & 0 & 0 & 0 & 0 & 0 & 0 & 0 & 0 \\
      0 & 0 & 0 & 0 & I_2 & 0 & 0 & 0 & 0 & 0 & 0 & 0 \\
      0 & 0 & 0 & 0 & 0 & I_3 & 0 & 0 & 0 & 0 & 0 & 0 \\
      0 & 0 & 0 & 0 & 0 & 0 & 1 & 0 & 0 & 0 & 0 & 0 \\
      0 & 0 & 0 & 0 & 0 & 0 & 0 & I_2 & 0 & 0 & 0 & 0 \\
      0 & 0 & 0 & 0 & 0 & 0 & 0 & 0 & I_3 & 0 & 0 & 0 \\
      0 & 0 & 0 & 0 & 0 & 0 & 0 & 0 & 0 & 1 & 0 & 0 \\
      0 & 0 & 0 &
      \tfrac{-15}{8\ell_1} & \tfrac{7}{16} \mathbf{t}^T_1 &
      \tfrac{-\ell}{32} \mathbf{\tau}^T_1 & 
      \tfrac{15}{8\ell_1} & \tfrac{7}{16} \mathbf{t}^T_1 &
      \tfrac{\ell}{32} \mathbf{\tau}^T_1 & 
      0 & 0 & 0 \\
      0 & 0 & 0 & 0 & 0 & 0 & 0 & 0 & 0 & 0 & 1 & 0 \\
      \tfrac{-15}{8\ell_2} & \tfrac{7}{16} \mathbf{t}^T_2 &
      \tfrac{-\ell}{32} \mathbf{\tau}^T_2 &
      0 & 0 & 0 &      
      \tfrac{15}{8\ell_2} & \tfrac{7}{16} \mathbf{t}^T_2 &
      \tfrac{\ell}{32} \mathbf{\tau}^T_2 & 
      0 & 0 & 0 \\
      0 & 0 & 0 & 0 & 0 & 0 & 0 & 0 & 0 & 0 & 0 & 1 \\
      \tfrac{-15}{8\ell_3} & \tfrac{7}{16} \mathbf{t}^T_3 &
      \tfrac{-\ell}{32} \mathbf{\tau}^T_3 &
      \tfrac{15}{8\ell_3} & \tfrac{7}{16} \mathbf{t}^T_3 &
      \tfrac{\ell}{32} \mathbf{\tau}^T_3 & 
      0 & 0 & 0 &      
      0 & 0 & 0 \\
    \end{array}
    \right].
\end{equation}

If this transformation is kept in factored form, \( D \) contains 57
nonzero entries and \( V^c \) contains 54 nonzero entries.   \(E \) is
just a Boolean matrix and its application requires copies.  So,
application of \( M \) requires no more than \( 111 \) floating-point
operations, besides the cost of forming the entries themselves.  While
this is about ten times the cost of the Hermite transformation, it is 
for about twice the number of basis functions and still well-amortized
over the cost of integration loops.  Additionally, one can multiply
out the product \( E V^c D \) symbolically and find only 81 nonzero
entries, which reduces the cost of multiplication accordingly.

\subsection{Generalizations}
\subsubsection{Non-affine mappings}
Non-affine geometric transformations, whether for simplicial or other
element shapes, present no major complications
to the theory.  In this case, \( K \) and \( \hat{K} \) are related by
a non-affine map, and \( P \) is taken to be the image of \( \hat{P}
\) under pull-back
\begin{equation}
  P = \left\{ F^*(\hat{p}) : \hat{p} \in \hat{P} \right\},
\end{equation}
although this space need not consist of polynomials for non-affine \(
F \).  At any rate, one may define Hermite elements on curvilinear
cells~\citep{ciarlet1978finite,dautray2012mathematical}.  In this
case, the Jacobian matrix varies spatially so that each instance of \(
J^T \) in~\eqref{eq:blockguy} must be replaced by the particular value
of \( J^T \) at each vertex.  

\subsubsection{Generalized pullbacks}
Many vector-valued finite element spaces make use of pull-backs other
than composition with affine maps.  For example, the Raviart-Thomas
and N\'ed\'elec elements use contravariant and covariant Piola maps,
respectively.  Because these preserve either normal or tangential
components, one can put the nodal basis functions of a given element
\( (K, P, N) \) and reference element \( (\hat{K}, \hat{P}, \hat{N})
\) into one-to-one correspondence by means of the Piola transform,
a fact used heavily in~\citep{RognesKirbyEtAl2009a} possible.  It
would be straightforward to give a generalization of affine
equivalence to equivalence under an arbitrary pull-back \( F^* \),
with push-forward defined in terms of \( F^* \).  In this case, the
major structure of \S~\ref{sec:lagrange} would be unchanged.

However, not all \( H(\mathrm{div}) \) elements form equivalent
families under the contravariant Piola transform.  For example,
Mardal, Tai, and Winther~\cite{mardal2002robust} give an element
that can be paired with discontinuous polynomials
to give uniform inf-sup stability on a scale of spaces between \(
H(\mathrm{div}) \) and \( (H^1)^2 \), although it is \( H^1 \)-nonconforming.
The degrees of freedom include constant and linear moments of normal
components on edges, which are preserved under Piola mapping.
However, the nodes also include the constant moments of the tangential
component on edges, which are \emph{not} preserved under Piola
transform.  One could push-forward both the normal and tangential
constant moments, then express them as a linear combination of the
normal and tangential moments on the reference cell in a manner
like~\eqref{eq:blockguy}.  One could see the
Mardal--Tai--Winther element as satisfying a kind of
``Piola-interpolation equivalence'' and readily adapt the techniques
for Hermite elements,

\subsection{A further note on computation}
We have commented on the added cost of multiplying the set of basis
functions by \( M \) during local integration.  It also also possible
to apply the transformation in a different way that perhaps more fully
leverages pre-existing computer routines.  With this approach, \( M \)
can also be included in local matrix assembly by means
means of a congruence transform acting on the ``wrong'' element matrix
as follows.

Given a finite element \( (K, P, N) \) with nodal basis \( \Psi = \{
\psi_i \}_{i=1}^\nu \) and bilinear form
\( a_K(\cdot, \cdot) \) over the domain \( K \), we want to compute the matrix
\begin{equation}
  A^K_{ij} = a_K(\psi_j, \psi_i).
\end{equation}
Suppose that a computer routine existed for evaluating \( A^K \) via a
reference mapping for affine-equivalent elements.  That is, given the
mapping \( F:\hat{K} \rightarrow K \), this routine maps all
integration to the reference domain \( \hat{K} \) assuming that
the integrand over \( K \) is just the affine pull-back of something
on \( \hat{K} \).  Consider the following computation:

\begin{equation}
  \begin{split}
    A^K_{ij} & = a_K(\psi_j, \psi_i) \\
    & = a_K \left( \sum_{\ell_2=1}^\nu M_{j\ell_2} F^*(\hat{\psi}_{\ell_2}) ,
    \sum_{\ell_1=1}^\nu M_{i\ell_1} F^*(\hat{\psi}_{\ell_1}) \right) \\
    & = \sum_{\ell_1, \ell_2 = 1}^\nu M_{j\ell_2} M_{i\ell_1} a_K( F^*(\hat{\psi}_{\ell_2}),
    F^*(\hat{\psi}_{\ell_1})) \\
  \end{split}
\end{equation}
Now, this is just expressed in terms of the affine pullback of
reference-element integrands and so could use the hypothesized computer routine.
We then have
\begin{equation}
  A^K_{ij} = \sum_{\ell_1, \ell_2 = 1}^\nu
    M_{j\ell_2} M_{i\ell_1} a_{\hat{K}}( \hat{\psi}_{\ell_2},
    \hat{\psi}_{\ell_1})
    = \sum_{\ell_1, \ell_2 = 1}^\nu
    M_{j\ell_1} M_{i\ell_2} \hat{A}^K_{\ell_1\ell_2},
\end{equation}
or, more compactly,
\[
A^K = M \tilde{A}^K M^T,
\]
where \( \tilde{A}^K \) is the matrix one would obtain by using the
pull-back of the reference element nodal basis functions instead of
the actual nodal basis for \( (K, P, N) \).  Hence, rather than
applying \( M \) invasively at each quadrature point, one may use
existing code for local integration and pre- and post-multiply the
resulting matrix by the basis transformation.  In the case of Hermite,
for example, 
applying \( M \) to a vector costs 12 operations, so applying \( M \)
to all 10 columns of \( \tilde{A}^K \) costs 120 operations, plus
another 120 for the transpose.  This adds 240 extra operations to the
cost of building \( \tilde{A}^K \), or just 2.4 extra FLOPs per entry
of the matrix.

One may also apply this idea in a ``matrix-free'' context.  Given a
routine for applying \( \tilde{A}^K \) to a vector, one may simply
apply \( M^T \) to the input vector, apply \( \tilde{A}^K \) to the
result, and post-multiply by \( M \).  Hence, one has the cost of
muliplying by \( \tilde{A}^K \) plus the cost of applying \( M \) and
its transpose to a single vector.  In the case of Hermite, one has the
cost of computing the ``wrong'' local matrix-vector product via an
existing kernel plus 24 additional operations.

Finally, we comment on evaluating discrete functions over
elements requiring such transforms.  Discrete function evaluation is
frequently required in matrix-free computation, nonlinear residual
evaluation, and in bilinear form evaluation when a coefficient is
expressed in a finite element space.
Suppose one has on a local element \( K \) a function expressed by
\[
u = \sum_{j=1}^\nu c_j \psi_j,
\]
where \( c \in \mathbb{R}^\nu \) is the vector of coefficients and
\( \{ \psi_j \} \) is the nodal basis for \( (K, P, N) \).
In terms of pulled-back reference basis functions, \( u \) is given by
\[
u = \sum_{j=1}^\nu c_j \left( \sum_{k=1}^\nu M_{jk} F^*(\hat{\psi}_k)
\right)
= \sum_{j, k=1}^\nu M_{jk} c_j F^*(\hat{\psi}_k),
\]
which can also be written as
\begin{equation}
u = \sum_{k=1}^\nu (M^T c)_k F^*(\hat{\psi}_k)
= \sum_{k=1}^\nu (V c)_k F^*(\hat{\psi}_k).
\end{equation}
Just as one can build element matrices by means of the ``wrong''
basis functions and a patch-up operation, one can also evaluate functions
by transforming the coefficients and then using the standard
pullback of the reference basis functions.  Such observations may
make incorporating nonstandard element transformations into existing
code more practical.

\section{What if $P \neq F^*(\hat{P})$?}
\label{sec:whatthebell}
The theory so far has been predicated on $F^*$ providing an isomorphism
between the reference and physical function spaces.  In certain cases,
however, this fails.  Our main motivation here is to transform the
Bell element, a near-relative of the quintic Argyris
element.  In this case, one takes $P$ to be the subspace of $P_5$ that
has cubic normal derivatives on edges rather than the typical quartic values.
This reduction of $P$ by three dimensions is accompanied 
by removing the three edge normal derivatives
at midpoints from $N$.  In general, however, the pull-back
$F^*(\hat{P})$ does not coincide with $P$.  Instead of cubic 
normal derivatives on edges, $F^*(\hat{P})$ has reduced degree in
some other direction corresponding to the image of the normal
under affine mapping.  The theory developed earlier
can be extended somewhat to resolve this situation.

\subsection{General theory: extending the finite element}
Abstractly, one may view the Bell element or other spaces built by
constraint as the intersection of the null spaces of a collection of
functionals acting on some larger space as follows.  Let $(K, P, N)$ be a finite
element.  Suppose that $P \subset \tilde{P}$ and that
\(
\{ \lambda_i
\}_{i=1}^\kappa \subset \left( C^k_b \right)^\prime
\)
are linearly independent functionals that when
acting on \( \tilde{P} \) satisfy
\begin{equation}
  \label{eq:PfromPtilde}
  P = \cap_{i=1}^\kappa \mathrm{null}(\lambda_i).
\end{equation}

The following result is not difficult to prove:
\begin{proposition}
  Let \( (K, P, N) \) be a finite element with
  \( \cap_{i=1}^\kappa\mathrm{null}(\lambda_i) = P \subset \tilde{P}
  \) as per~\eqref{eq:PfromPtilde}.  Similarly, let
  Let \( (\hat{K}, \hat{P}, \hat{N})\) be a reference
  element with
  \( \cap_{i=1}^\kappa\mathrm{null}(\hat{\lambda}_i) = \hat{P} \subset
  \tilde{\hat{P}} \).  Suppose that
  \(\tilde{P} = F^*(\tilde{\hat{P}}).\)
  Then \(P = F^*(\hat{P})\) iff
  \begin{equation}
    \label{eq:spancondition}
  \mathrm{span}\{F_*(\lambda_i)\}_{i=1}^\kappa =
  \mathrm{span}\{\hat{\lambda}_i\}_{i=1}^\kappa.
  \end{equation}
\end{proposition}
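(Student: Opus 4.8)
The plan is to reduce the statement to an elementary fact of finite-dimensional linear algebra — essentially the one behind the Brenner--Scott proposition quoted above — namely that a subspace of a finite-dimensional space, presented as the joint null space of a finite collection of functionals, is determined by, and conversely determines, the span of that collection. Everything else is bookkeeping with the pull-back $F^*$ and push-forward $F_*$. Since $F$ is invertible, $F^*:C^k_b(\hat{K}) \to C^k_b(K)$ and $F_*:C^k_b(K)^\prime \to C^k_b(\hat{K})^\prime$ are isomorphisms, and by the hypothesis $\tilde{P} = F^*(\tilde{\hat{P}})$ together with Proposition~\ref{prop:iso}, $F^*$ restricts to an isomorphism $\tilde{\hat{P}} \to \tilde{P}$ compatible with restriction of functionals. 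The one identity I would use repeatedly is $n(F^*(\hat{p})) = F_*(n)(\hat{p})$.

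First I would observe that this identity makes $F^*$ carry, for any $n \in C^k_b(K)^\prime$, the null space of $F_*(n)$ inside $\tilde{\hat{P}}$ bijectively onto the null space of $n$ inside $\tilde{P}$. Applying this with $n = (F_*)^{-1}(\hat{\lambda}_i)$, so that $F_*(n) = \hat{\lambda}_i$, and using that the injective map $F^*$ commutes with intersections of subspaces, I obtain
\[
  F^*(\hat{P}) = F^*\!\left( \bigcap_{i=1}^\kappa \mathrm{null}(\hat{\lambda}_i) \right) = \bigcap_{i=1}^\kappa \mathrm{null}\!\left( (F_*)^{-1}(\hat{\lambda}_i) \right),
\]
with the null spaces on the right taken in $\tilde{P}$. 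Setting $\mu_i := (F_*)^{-1}(\hat{\lambda}_i)$, the claim $P = F^*(\hat{P})$ thus becomes $\bigcap_i \mathrm{null}(\lambda_i) = \bigcap_i \mathrm{null}(\mu_i)$ as subspaces of $\tilde{P}$.

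Then I would finish by invoking the linear-algebra fact in both directions: the span of $\{\lambda_i\}$ inside $\tilde{P}^\prime$ is precisely the annihilator of $\bigcap_i \mathrm{null}(\lambda_i)$ (and likewise for $\mu_i$), so by the double-annihilator identity the two joint null spaces coincide if and only if $\mathrm{span}\{\lambda_i\} = \mathrm{span}\{\mu_i\}$ as subspaces of $\tilde{P}^\prime$. Applying the isomorphism $F_*$ to this span equality and recalling $F_*(\mu_i) = \hat{\lambda}_i$ turns it into $\mathrm{span}\{F_*(\lambda_i)\} = \mathrm{span}\{\hat{\lambda}_i\}$, which is exactly~\eqref{eq:spancondition}.

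The step I expect to need the most care is not any computation but keeping track of \emph{where} the spans and null spaces live. The $\lambda_i$ and $\hat{\lambda}_i$ are functionals on the full dual spaces $C^k_b(\cdot)^\prime$, whereas the hypothesis $P = \bigcap \mathrm{null}(\lambda_i)$ and the desired conclusion only involve their actions on the polynomial spaces; one must check that passing to these restrictions is harmless, that is, that equality of the joint kernels in $\tilde{P}$ is equivalent to equality of the restricted spans in $\tilde{P}^\prime$, and that for the elements of interest (Bell in particular) this matches~\eqref{eq:spancondition} read in $C_b^k(\hat{K})^\prime$, since there the supplementary tangential-type functionals are already independent of the normal ones on the polynomial space. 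One also needs the routine observation that $F_*$ intertwines restriction to $\tilde{P}$ with restriction to $\tilde{\hat{P}}$, which is immediate from $n(F^*(\hat{p})) = F_*(n)(\hat{p})$ together with $F^*(\tilde{\hat{P}}) = \tilde{P}$.
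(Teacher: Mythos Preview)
The paper does not actually give a proof of this proposition; it simply introduces it with ``The following result is not difficult to prove'' and moves on. Your argument is a correct way to fill in that omission: pulling the constraint functionals back through $F_*$ to the common ambient space $\tilde{P}$ and then invoking the double-annihilator identity is precisely the elementary linear-algebra content the author is alluding to.

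The only point worth flagging is the one you already isolate in your final paragraph. Your chain of equivalences yields equality of the spans of the \emph{restrictions} to $\tilde{\hat{P}}$, i.e.\ in $\tilde{\hat{P}}^\prime$, whereas the displayed condition~\eqref{eq:spancondition} can be read in the ambient dual $C^k_b(\hat{K})^\prime$. These two readings are not equivalent in general: equality in the ambient dual certainly implies equality after restriction, but the converse would require additional structure. For the intended application (Bell, and more generally constraints given by edge moments of directional derivatives), the distinction is immaterial because the relevant functionals are already determined on $\tilde{\hat{P}}$ and the proposition is only ever used in the ``if'' direction, which is insensitive to the ambiguity. So your proof is complete for the proposition as it functions in the paper, and your caveat is exactly the right caveat to record.
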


In the case of the Bell element, the span condition~\eqref{eq:spancondition}
fails and so that the function space is not
preserved under affine mapping.  Consequently, the theory of the
previous section predicated on this preservation does not directly
apply.  Instead, we proceed by making the following observation.
\begin{proposition}
  \label{prop:extendedelement}
  Let $(K, P, N)$ be a finite element with $P \subset \tilde{P}$
  satisfying \( P = \cap_{i=1}^\kappa \mathrm{null}(\lambda_i)\) for
  linearly independent functionals \( \{\lambda_i\}_{i=1}^\kappa \).
  Define
  \[
  \tilde{N} = \begin{bmatrix} N \\ L \end{bmatrix}
  \]
  to include the nodes of $N$ together with
  $L = \begin{bmatrix} \lambda_1 & \lambda_2 & \dots & \lambda_\kappa \end{bmatrix}^T$.
  Then $(K, \tilde{P}, \tilde{N})$ is
  a finite element.
\end{proposition}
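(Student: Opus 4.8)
The plan is to verify, in turn, the three defining properties of a finite element for the triple $(K,\tilde{P},\tilde{N})$, with essentially all of the work concentrated in the last one. The first property ($K\subset\mathbb{R}^d$ a bounded domain) is inherited verbatim from $(K,P,N)$. For the second, finite-dimensionality of $\tilde{P}$ and the inclusion $\tilde{P}\subset C^k_b(K)$ come from the standing assumptions on $\tilde{P}$ together with $P\subset\tilde{P}$; and each $\lambda_i\in C^k_b(K)'$ by hypothesis, so $\tilde{N}\subset C^k_b(K)'$. It then remains to show that the restrictions of $\tilde{N}$ to $\tilde{P}$ form a basis of $\tilde{P}'$ (from which linear independence of $\tilde{N}$ in the full dual is automatic, since a relation that kills everything in $\tilde{P}$ in particular kills a basis).

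I would handle this by a dimension count plus a linear-independence argument. Since the $\lambda_i$ are linearly independent on $\tilde{P}$ and $P$ is exactly their common null space, the evaluation map $p\mapsto(\lambda_1(p),\dots,\lambda_\kappa(p))$ from $\tilde{P}$ onto $\mathbb{R}^\kappa$ has kernel $P$, so $\dim\tilde{P}=\dim P+\kappa=\nu+\kappa=|\tilde{N}|$. Hence it suffices to prove that $\{n_j|_{\tilde{P}}\}\cup\{\lambda_i|_{\tilde{P}}\}$ is linearly independent. Suppose $\sum_j a_j n_j+\sum_i b_i\lambda_i$ vanishes on all of $\tilde{P}$. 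Restricting this relation to the subspace $P\subset\tilde{P}$ and using that every $\lambda_i$ annihilates $P$ (by~\eqref{eq:PfromPtilde}) leaves $\sum_j a_j n_j|_P=0$; since $(K,P,N)$ is a finite element, $\{n_j|_P\}$ is a basis of $P'$, forcing $a_j=0$ for all $j$. The relation then collapses to $\sum_i b_i\lambda_i=0$ on $\tilde{P}$, and linear independence of the $\lambda_i$ on $\tilde{P}$ gives $b_i=0$. By the dimension count, the linearly independent set $\pi\tilde{N}$ is then a basis of $\tilde{P}'$, which finishes the verification.

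The step that genuinely needs care — and, I would argue, an explicit hypothesis — is the identity $\dim\tilde{P}=\nu+\kappa$: linear independence of $\{\lambda_i\}$ in the ambient dual $C^k_b(K)'$ by itself does not preclude their \emph{restrictions} to $\tilde{P}$ becoming dependent, in which case $\tilde{N}$ would simply be too large to be linearly independent on $\tilde{P}$ and the statement would fail. In the intended applications this does not happen; for the Bell element one has $\tilde{P}=P_5$, the $\lambda_i$ are the three functionals reducing the edge normal derivatives from quartic to cubic, and indeed $\dim\tilde{P}-\dim P=3=\kappa$. I would therefore either read the nondegeneracy requirement into the statement of~\eqref{eq:PfromPtilde} (i.e.\ the $\lambda_i$ are linearly independent when acting on $\tilde{P}$) or add it explicitly to the hypotheses of Proposition~\ref{prop:extendedelement}; with that in place, the remainder of the proof is routine linear algebra as sketched above.
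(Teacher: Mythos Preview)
Your proof is correct and follows essentially the same approach as the paper's: assume a vanishing linear combination of the $n_j$ and $\lambda_i$ on $\tilde{P}$, restrict to $P$ to kill the $n_j$-coefficients (using that each $\lambda_i$ annihilates $P$ and that the $n_j$ span $P'$), then invoke linear independence of the $\lambda_i$ to kill the rest. Your explicit dimension count $\dim\tilde{P}=\nu+\kappa$ and your observation that the $\lambda_i$ must be linearly independent \emph{on $\tilde{P}$} (not merely in the ambient $C_b^k(K)'$) are well taken; the paper's proof glosses over both points and tacitly assumes them, so your version is in fact the more careful of the two.
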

\begin{proof}
  Since we have a finite-dimensional function space, it remains to
  show that $\tilde{N}$ is linearly independent and hence spans
  $\tilde{P}^\prime$.  Consider a linear combination in
  $\tilde{P}^\prime$ 
  \[
  \sum_{i=1}^\nu c_i n_i + \sum_{i=1}^\kappa d_i \lambda_i = 0.
  \]
  Apply this linear combination to any $p \in P$ to find
  \[
  \sum_{i=1}^\nu c_i n_i(p) = 0
  \]
  since \( \lambda_i(p) = 0 \) for \( p \in P\).
  Because $(K, P, N)$ is a finite element, the $n_i$ are linearly
  independent in \( P^\prime \) so $c_i = 0$ for $1 \leq i \leq \nu$.
  Applying the same linear combination to any $ \in \tilde{P}
  \backslash P$ then gives that $d_i=0$ since the constraint
  functionals are also linearly independent.  
\end{proof}
Given a nodal basis \( (K, \tilde{P}, \tilde{N})\), it is easy to
obtain one for \( (K, P, N) \). 
\begin{proposition}
  \label{prop:extendedelementbasis}
  Let $(K, P, N)$, $\{ \lambda_i\}_{i=1}^\kappa$, and $(K, \tilde{P}, \tilde{N})$ be as in Proposition~\ref{prop:extendedelement}.  Order the nodes in $\tilde{N}$ by \( \tilde{N} = \begin{bmatrix} N \\ L \end{bmatrix} \) with $L_i = \lambda_i$ for $1 \leq i \leq \kappa$.  Let $\{\tilde{\psi}_{i}\}_{i=1}^{\nu + \kappa}$ be the nodal basis for $(K, \tilde{P}, \tilde{N})$.
  Then \( \{ \tilde{\psi}_i \}_{i=1}^\nu \) is the nodal basis for $(K, P, N)$.
\end{proposition}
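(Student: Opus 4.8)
The plan is to show that the first $\nu$ functions of the extended nodal basis already lie in $P$, satisfy the Kronecker property against $N$, and are $\nu$ in number; uniqueness of the nodal basis then finishes the argument. First I would recall that, by construction in Proposition~\ref{prop:extendedelement}, the ordered node vector for $(K,\tilde P,\tilde N)$ is $\tilde N = \begin{bmatrix} N \\ L \end{bmatrix}$, so that $\tilde n_i = n_i$ for $1 \le i \le \nu$ and $\tilde n_{\nu+j} = \lambda_j$ for $1 \le j \le \kappa$. The defining property of the nodal basis $\{\tilde\psi_i\}_{i=1}^{\nu+\kappa}$ is then $\tilde n_i(\tilde\psi_j) = \delta_{ij}$ for all $1 \le i,j \le \nu+\kappa$.

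The key step is to observe that for each $i$ with $1 \le i \le \nu$ we have $\lambda_j(\tilde\psi_i) = \tilde n_{\nu+j}(\tilde\psi_i) = \delta_{\nu+j,\,i} = 0$ for every $1 \le j \le \kappa$, since $\nu + j > \nu \ge i$. Because $\tilde\psi_i \in \tilde P$, the hypothesis~\eqref{eq:PfromPtilde} that $P = \cap_{j=1}^\kappa \mathrm{null}(\lambda_j)$ (as a subspace of $\tilde P$) then forces $\tilde\psi_i \in P$. This is the crux of the argument; everything else is bookkeeping. Thus $\{\tilde\psi_i\}_{i=1}^\nu \subset P$.

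Next I would note that, still using the Kronecker property of the extended basis but now restricting both indices to the range $1 \le i,j \le \nu$, we have $n_i(\tilde\psi_j) = \tilde n_i(\tilde\psi_j) = \delta_{ij}$. So the set $\{\tilde\psi_i\}_{i=1}^\nu$ is contained in $P$, has cardinality $\nu = \dim P$, and is dual to the nodes $N$. Linear independence of this set is immediate from the Kronecker relations (a vanishing linear combination, evaluated against $n_i$, kills the $i$-th coefficient), so it is in fact a basis for $P$. Since the nodal basis of $(K,P,N)$ is the unique basis of $P$ satisfying $n_i(\psi_j) = \delta_{ij}$, we conclude $\psi_i = \tilde\psi_i$ for $1 \le i \le \nu$, which is the claim.

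The only place any real content enters is the second paragraph, where the constraint characterization~\eqref{eq:PfromPtilde} is invoked to promote membership in $\tilde P$ to membership in $P$; if that step is made carefully the rest is routine and can be compressed. One should be slightly careful that~\eqref{eq:PfromPtilde} is read as an identity of subspaces of $\tilde P$ (the $\lambda_j$ are functionals on the ambient $C_b^k(K)^\prime$, but it is their restrictions to $\tilde P$ whose common null space equals $P$), which is exactly how it is used here.
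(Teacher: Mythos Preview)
Your proof is correct and follows exactly the same approach as the paper's own proof: verify that $\lambda_j(\tilde\psi_i)=0$ forces $\tilde\psi_i\in P$ via~\eqref{eq:PfromPtilde}, and that $n_i(\tilde\psi_j)=\delta_{ij}$ by the ordering of $\tilde N$. The paper's version is simply terser, omitting the explicit linear independence and uniqueness remarks you spell out.
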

\begin{proof}
  Clearly, \(n_i(\tilde{\psi}_j) = \delta_{ij}\) for
  \( 1 \leq i, j \leq \nu \)
  by the ordering of the nodes in \(\tilde{N}\).  Moreover,
  \(\{\tilde{\psi}_i\}_{i=1}^\nu \subset P\) because
  \(\lambda_i(\tilde{\psi}_j) = 0\) for each \(1 \leq i \leq \kappa\).
\end{proof}

\subsection{The Bell element}
So, we can obtain a nodal basis for the Bell element or others with
similarly constrained function spaces by mapping the
nodal basis for a slightly larger finite element and extracting a
subset of the basis functions.  Let 
\( (K, P, N) \) and \( (\hat{K}, \hat{P}, \hat{N})\) be the Bell
elements over \(K\) and reference cell \( \hat{K} \).

Recall that the Legendre polynomial of degree $n$ is orthogonal
to polynomials of degree $n-1$ or less.
Let $\mathcal{L}^n$ be the Legendre polynomial of degree $n$ mapped from the biunit interval to  
edge \( \gamma_i \) of $K$.   Define a functional
\begin{equation}
  \label{eq:elldef}
  \lambda_i(p) = \int_{\gamma_i} \mathcal{L}^4(s)
  \left( \mathbf{n}_i \cdot \nabla p \right)ds.
\end{equation}
For any \( p \in P_5(K) \), its normal derivative on edge \( i \) 
is cubic iff $\lambda_i(p) = 0$.  So, the constraint functionals
are given in
\(  L = \begin{bmatrix} \lambda_1 & \lambda_2 & \lambda_3 \end{bmatrix}^T
\)
and $\tilde{N} = \begin{bmatrix} N \\ L \end{bmatrix}$ as in
Proposition~\ref{prop:extendedelement}.  We define
\begin{equation}
  \label{eq:ellhatdef}
  \hat{\lambda}_i(p) = \int_{\hat{\gamma}_i} \mathcal{L}^4(s)
  \left( \hat{\mathbf{n}}_i \cdot \nabla p \right)ds
\end{equation}
and hence $(\hat{K}, \hat{P}, \hat{N})$ as well as \( \hat{L} \) and
\( \tilde{\hat{N}} \) in a similar way.

$P$ and $\hat{P}$ are the constrained spaces --
quintic polynomials with cubic normal derivatives on edges, while
$\tilde{P}$ and $\tilde{\hat{P}}$ are the spaces of full
quintic polynomials over $K$ and $\hat{K}$, respectively.  We must
construct a nodal basis for \( (\hat{K}, \tilde{\hat{P}},
\tilde{\hat{N}}) \), map it to a
nodal basis for \( (K, \tilde{P}, \tilde{N}) \) by the techniques in
Section~\ref{sec:theory}, and then take the subset of basis functions
corresponding to the Bell basis.

This is accomplished by specifying a compatible nodal extension
of \( \tilde{N} \) and \( \tilde{\hat{N}} \) by including the
edge moments of \emph{tangential} derivatives against \( \mathcal{L}^4
\)  with those of \(\tilde{N}\) and \(\tilde{\hat{N}} \).  We define
\begin{equation}
  \label{eq:lamprime}
  \begin{split}
  \lambda_i^\prime(p) & = \int_{\gamma_i} \mathcal{L}^4(s) \left(
  \mathbf{t}_i \cdot \nabla p \right) ds, \\
  \hat{\lambda}_i^\prime(p) & = \int_{\hat{\gamma}_i} \mathcal{L}^4(s) \left(
  \hat{\mathbf{t}}_i \cdot \nabla p \right) ds. \\
  \end{split}
\end{equation}

We must specify the \( E \), \( V^c \), and \( D \) matrices for this
extended set of finite element nodes.  We focus first on \( D \),
needing to compute each \(
\lambda_i^\prime \) in terms of the remaining functionals.
As with Morley and Argyris, we begin with univariate results.

The following is readily confirmed, for example, by noting the right-hand
side is a quintic polynomial and computing values and first and second
derivatives at $\pm 1$:
\begin{proposition}
  Let $p$ be any quintic polynomial on \([-1, 1]\).  Then
  \begin{equation}
    \label{eq:interp}
  \begin{split}
    16 p(x) & =  -\left( x-1 \right)^3 \left(
    p^{\prime\prime}(-1) \left( x+1 \right)^2 
    + p^\prime(-1) \left( x+1\right) \left( 3x + 5 \right)
    + p(-1) \left( 3 x^2 + 9x + 8 \right) \right) \\
    & + \left( x + 1 \right)^3 \left(
    p^{\prime\prime}(1) \left( x-1 \right)^2 
     - p^\prime(1)  \left( x-1 \right)\left( 3x - 5 \right)
    + p(1) \left( 3 x^2 - 9 x + 8 \right) \right).
  \end{split}
  \end{equation}
\end{proposition}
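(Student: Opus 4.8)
The plan is to avoid expanding either side and instead appeal to the uniqueness of two-point Hermite interpolation. Write $r(x)$ for the right-hand side of~\eqref{eq:interp}. First I would note that $r$ is a polynomial of degree at most $5$: in each of its two summands a quadratic in $x$ multiplies a cubic, either $(x-1)^3$ or $(x+1)^3$. Hence $\tfrac{1}{16}r$ lies in $P_5$, the same space as $p$. Next recall the standard fact that the Hermite interpolation problem with data $\{\text{value},\ \text{first derivative},\ \text{second derivative}\}$ prescribed at each of the two nodes $x=1$ and $x=-1$ has a unique solution in $P_5$: a nonzero element of $P_5$ vanishing to order three at both $\pm 1$ would be divisible by $(x-1)^3(x+1)^3$ and hence have degree at least $6$, so the $6\times 6$ collocation map is injective, therefore bijective. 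Consequently it suffices to show that $r$ and $16p$ share the same value, first derivative, and second derivative at $x=1$ and at $x=-1$; then $r\equiv 16p$.

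Consider the node $x=1$. The first summand of $r$ carries the factor $(x-1)^3$, so it and its first two derivatives vanish there, and only the second summand $(x+1)^3 Q_+(x)$ contributes, where $Q_+(x) = p''(1)(x-1)^2 - p'(1)(x-1)(3x-5) + p(1)(3x^2-9x+8)$. A direct computation of $Q_+$ and its first two derivatives at $x=1$ gives $Q_+(1)=2p(1)$, $Q_+'(1)=2p'(1)-3p(1)$, and $Q_+''(1)=2p''(1)-6p'(1)+6p(1)$; combining these via the Leibniz rule with the values $8,12,12$ of $(x+1)^3$ and its first two derivatives at $x=1$ yields precisely $16p(1)$, $16p'(1)$, and $16p''(1)$. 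The node $x=-1$ is handled identically: there the second summand carries $(x+1)^3$ and drops out, and the remaining computation is the mirror image of the one above — or, more economically, one observes that~\eqref{eq:interp} is invariant under the substitution $x\mapsto -x$ accompanied by $p(\cdot)\mapsto p(-\cdot)$, which swaps the two nodes and the two summands. Either way, the six matching conditions hold, and uniqueness forces the identity.

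The only labor is differentiating the products $(x\pm 1)^3 Q_\pm(x)$ twice and tracking signs; the conceptual content — that the cubic factor annihilates the ``other'' summand through second order, so each endpoint sees a single, deliberately engineered term — removes all subtlety, so there is no genuine obstacle. The mildest nuisance is confirming that the particular quadratic $3x^2\mp 9x+8$ and linear factor $3x\mp 5$ appearing in~\eqref{eq:interp} are exactly those whose low-order Taylor data at the near node produce the coefficients used above; once this is checked at one endpoint, the $x\mapsto -x$ symmetry disposes of the other.
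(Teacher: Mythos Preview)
Your proof is correct and follows exactly the approach the paper itself sketches: the paper simply remarks that the identity ``is readily confirmed, for example, by noting the right-hand side is a quintic polynomial and computing values and first and second derivatives at $\pm 1$,'' which is precisely your uniqueness-of-Hermite-interpolation argument. Your write-up merely fills in the details the paper leaves to the reader.
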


The formula~\eqref{eq:interp} can be differentiated and then integrated
against \( \mathcal{L}^4 \) to show that
  \begin{equation}
    \int_{-1}^1 p^\prime(x) \mathcal{L}^4(x) dx =
    \tfrac{1}{21}
    \left[p(1) - p(-1) - p^\prime(1) - p^\prime(-1) +
    \tfrac{1}{3} \left(p^{\prime\prime}(1) - p^{\prime\prime}(-1)\right) \right].
  \end{equation}

Then, this can be mapped to a general interval $[\tfrac{-\ell}{2},
  \tfrac{\ell}{2}]$ by a simple change of variables:
  \begin{equation}
    \int_{-\tfrac{\ell}{2}}^{\tfrac{\ell}{2}} p^\prime(x) \mathcal{L}^4(x) dx =
    \tfrac{1}{21}
    \left[p\left(\tfrac{\ell}{2}\right) - p\left(-\tfrac{\ell}{2}\right)
    - \tfrac{\ell}{2} \left(p^\prime\left(\tfrac{\ell}{2}\right)
    + p^\prime\left(-\tfrac{\ell}{2}\right)\right) +
    \tfrac{\ell^2}{12}\left( p^{\prime\prime}\left(\tfrac{\ell}{2}\right)
    - p^{\prime\prime}\left(-\tfrac{\ell}{2}\right)\right) \right].
  \end{equation}

Now, we can use this to express the functionals \( \lambda_i^\prime \) from~\eqref{eq:lamprime} as linear combinations of the Bell nodes:
\begin{proposition}
  Let $K$ be a triangle and $\mathbf{v}_a$ and $\mathbf{v}_b$ are the
  beginning and ending vertex of edge \( \gamma_i \) with
  length $\ell_i$.
  Let $p$ be any bivariate quintic polynomial over $K$ and $\lambda_i^\prime$
  defined in~\eqref{eq:lamprime}.  Then the restriction of $\lambda_i^\prime$
  to bivariate quintic polynomials satisfies 
  \begin{equation}
    \pi \lambda_i^\prime =
    \tfrac{1}{21}
    \left[
      \pi \delta_{\mathbf{v}_b} - \pi\delta_{\mathbf{v}_a}
      - \tfrac{\ell_i}{2}
      \left(\pi\delta_{\mathbf{v}_b}^{\mathbf{t}_i} 
      + \pi\delta_{\mathbf{v}_b}^{\mathbf{t}_i} 
      \right)
      + \tfrac{\ell_i^2}{12}
      \left(
      \pi\delta_{\mathbf{v}_b}^{\mathbf{t}_i\mathbf{t}_i}
      - \pi\delta_{\mathbf{v}_b}^{\mathbf{t}_i\mathbf{t}_i}
      \right)
      \right],
  \end{equation}
  and hence
  \begin{equation}
    \begin{split}
    \pi \lambda_i^\prime = & 
    \tfrac{1}{21}
    \left[
      \pi\delta_{\mathbf{v}_b} - \pi\delta_{\mathbf{v}_a}
      \right] \\
    & -\tfrac{\ell_i}{42}
      \left[
        t_i^{\mathbf{x}} \left( \pi\delta_{\mathbf{v}_b}^\mathbf{x}
      + \pi\delta_{\mathbf{v}_a}^\mathbf{x} \right)
      + t_i^{\mathbf{y}} \left( \pi\delta_{\mathbf{v}_b}^\mathbf{y}
      + \pi \delta_{\mathbf{v}_a}^\mathbf{y} \right)
      \right] \\
      & + 
      \tfrac{\ell_i^2}{252}
      \left(
      \left(t_i^{\mathbf{x}}\right)^2
      \left( \pi\delta_{\mathbf{v}_b}^{\mathbf{xx}}
      -\pi\delta_{\mathbf{v}_a}^{\mathbf{xx}}
      \right)
      +
      2 t_i^{\mathbf{x}}t_i^{\mathbf{y}}
      \left( \pi\delta_{\mathbf{v}_b}^{\mathbf{xy}} -\pi\delta_{\mathbf{v}_a}^{\mathbf{xy}}
      \right)
      + \left(t_i^{\mathbf{y}}\right)^2
      \left( \pi\delta_{\mathbf{v}_b}^{\mathbf{yy}} -\pi\delta_{\mathbf{v}_a}^{\mathbf{yy}}
      \right)
      \right).
    \end{split}
  \end{equation}
\end{proposition}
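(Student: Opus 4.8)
The plan is to reduce this bivariate statement to the scaled univariate quadrature identity obtained above from~\eqref{eq:interp}, by restricting $p$ to the edge $\gamma_i$ and parametrizing by arc length. First I would let $\mathbf{e}_i$ be the midpoint of $\gamma_i$ and parametrize the edge by $s \mapsto \mathbf{e}_i + s\,\mathbf{t}_i$ for $s \in \left[ -\tfrac{\ell_i}{2}, \tfrac{\ell_i}{2} \right]$; by the tangent-orientation convention fixed earlier, $s = -\tfrac{\ell_i}{2}$ is $\mathbf{v}_a$ and $s = \tfrac{\ell_i}{2}$ is $\mathbf{v}_b$. Setting $q(s) = p(\mathbf{e}_i + s\,\mathbf{t}_i)$, the function $q$ is a univariate quintic whenever $p \in P_5(K)$, and the chain rule gives $q^\prime(s) = \mathbf{t}_i \cdot \nabla p(\mathbf{e}_i + s\,\mathbf{t}_i)$ and $q^{\prime\prime}(s) = \mathbf{t}_i^T \nabla^2 p(\mathbf{e}_i + s\,\mathbf{t}_i)\, \mathbf{t}_i$. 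Consequently $q\left( \pm \tfrac{\ell_i}{2} \right)$, $q^\prime\left( \pm \tfrac{\ell_i}{2} \right)$, and $q^{\prime\prime}\left( \pm \tfrac{\ell_i}{2} \right)$ are exactly the point value, first tangential derivative, and second tangential derivative of $p$ at $\mathbf{v}_b$ (for the $+$ sign) and at $\mathbf{v}_a$ (for the $-$ sign), i.e.\ the functionals $\delta_{\mathbf{v}_b}$, $\delta^{\mathbf{t}_i}_{\mathbf{v}_b}$, $\delta^{\mathbf{t}_i\mathbf{t}_i}_{\mathbf{v}_b}$ and their counterparts at $\mathbf{v}_a$, all acting on $p$.

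Next I would note that $\mathcal{L}^4$ restricted to $\gamma_i$ is precisely the degree-$4$ Legendre polynomial pulled back to $\left[ -\tfrac{\ell_i}{2}, \tfrac{\ell_i}{2} \right]$ under this arc-length parametrization, so that $\lambda^\prime_i(p) = \int_{-\ell_i/2}^{\ell_i/2} q^\prime(s)\, \mathcal{L}^4(s)\, ds$. Applying the scaled identity for $\int p^\prime \mathcal{L}^4$ on $\left[ -\tfrac{\ell}{2}, \tfrac{\ell}{2} \right]$ (derived above by a change of variables) to $q$ and substituting the identifications of the previous paragraph — the value difference $q(\tfrac{\ell_i}{2}) - q(-\tfrac{\ell_i}{2})$, the derivative sum $q^\prime(\tfrac{\ell_i}{2}) + q^\prime(-\tfrac{\ell_i}{2})$, and the second-derivative difference $q^{\prime\prime}(\tfrac{\ell_i}{2}) - q^{\prime\prime}(-\tfrac{\ell_i}{2})$ — yields the first displayed formula. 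Since this holds for every $p \in P_5(K)$, it is an identity of functionals restricted to quintics, i.e.\ for $\pi\lambda^\prime_i$.

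For the second (``hence'') formula I would expand the tangential directional derivatives in the local Cartesian basis: by definition $\delta^{\mathbf{t}_i}_{\mathbf{v}} = t^{\mathbf{x}}_i \delta^{\mathbf{x}}_{\mathbf{v}} + t^{\mathbf{y}}_i \delta^{\mathbf{y}}_{\mathbf{v}}$, and from $\mathbf{t}_i^T \nabla^2 p\, \mathbf{t}_i$ one gets $\delta^{\mathbf{t}_i\mathbf{t}_i}_{\mathbf{v}} = (t^{\mathbf{x}}_i)^2 \delta^{\mathbf{xx}}_{\mathbf{v}} + 2 t^{\mathbf{x}}_i t^{\mathbf{y}}_i \delta^{\mathbf{xy}}_{\mathbf{v}} + (t^{\mathbf{y}}_i)^2 \delta^{\mathbf{yy}}_{\mathbf{v}}$. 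Substituting these into the first formula and collecting constants — $\tfrac{1}{21}\cdot\tfrac{\ell_i}{2} = \tfrac{\ell_i}{42}$ and $\tfrac{1}{21}\cdot\tfrac{\ell_i^2}{12} = \tfrac{\ell_i^2}{252}$ — gives the stated expansion. There is no real analytic obstacle here; the work is entirely bookkeeping, and the only place care is needed is in pinning down the orientation and parametrization conventions (which endpoint is $\mathbf{v}_a$ versus $\mathbf{v}_b$, arc length versus an affine parameter, and the matching of $\mathcal{L}^4$ on the edge with the Legendre polynomial in the univariate lemma) together with the signs in the directional-derivative expansions.
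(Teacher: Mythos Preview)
Your proposal is correct and follows essentially the same approach as the paper: the paper derives the univariate identity on $[-1,1]$, scales it to $[-\tfrac{\ell}{2},\tfrac{\ell}{2}]$, and then states the proposition as its direct application to the edge restriction, and your proof simply makes explicit the arc-length parametrization and chain-rule identifications that the paper leaves implicit. Your care about orientation and parametrization conventions is well placed but introduces nothing beyond what the paper already fixes in its setup.
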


Now, \( V^c \) is quite similar to that
for the Argyris element.  There is a slight difference in the handling
the edge nodes, for we have an integral moment instead of a point
value and must account for the edge length accordingly.  By converting
between normal/tangent and Cartesian coordinates via the matrix \( G_i
\) and mapping to the reference element, we find that for any \( p \),
\begin{equation}
  \begin{split}
    \begin{bmatrix}
    \lambda_i(p) \\ \lambda_i^\prime(p)
    \end{bmatrix}
    & = \int_{\gamma_i} \mathcal{L}^4(s) \left( G_i \nabla p \right)
    ds \\
    & = \int_{\hat{\gamma}_i}
    \left|\tfrac{d\hat{s}}{ds} \right|\mathcal{L}^4(\hat{s}) \left( G_i J^T \hat{G}_i^T
    \hat{\nabla}^{\hat{\mathbf{n}}_i\hat{\mathbf{t}}_i} \hat{p} \right)
    d\hat{s}\\
    & = \left|\tfrac{d\hat{s}}{ds} \right|G_i J^T \hat{G}_i^T
    \begin{bmatrix} \hat{\lambda_i}(p) \\ \hat{\lambda_i}^\prime(p) \end{bmatrix}
  \end{split}
\end{equation}
This calculation shows that \( V^C \) for the Bell element is
identical to~\eqref{eq:VC} for Argyris, except with a geometric scaling
of the \( B \) matrices.

The extraction matrix \( E \) for the extended Bell elements
consisting of full quintics now is identical to that for Argyris.
Then, when evaluating basis functions, one multiplies the affinely
mapped set of basis values by \( V^T \) and then takes only the first
18 entries to obtain the local Bell basis.

\subsection{A remark on the Brezzi-Douglas-Fortin-Marini element}
In~\citep{Kir04}, we describe a two-part process for computing the
triangular Brezzi-Douglas-Fortin-Marini (BDFM)
element~\cite{fortin1991mixed}, an \(H(\mathrm{div})\) conforming finite element
based on polynomials of degree \( k \) with normal components
constrained to have degree \( k - 1 \).  This is a reduction of the
Brezzi-Douglas-Marini element~\cite{brezzi1985two} somewhat as
Bell is of Argyris.  However, as both elements form Piola-equivalent
families, the transformation techniques developed here are not needed.

Like the Bell element, one can define constraint functionals (integral
moments of normal components against the degree \( k \) Legendre
polynomial) for BDFM.  In~\citep{Kir04}, we formed a basis for the
intersection of the null spaces of these functionals by means of
a singular value  decomposition.  A nodal basis for the BDFM space
then followed by 
building and inverting a generalized Vandermonde matrix on the basis
for this constrained space.

In light of Propositions~\ref{prop:extendedelement}
and~\ref{prop:extendedelementbasis}, however, this process was rather
inefficient.  Instead, we could have merely extended the BDFM nodes by the
constraint functionals, building and inverting a single
Vandermonde-like matrix.  If one takes the BDM
edge degrees of freedom as moments of normal components against
Legendre polynomials up to degree $(k-1)$ instead of pointwise normal
values, then one can even build a basis for BDM that includes a
a basis for BDFM as a proper subset.

\section{Numerical results}
\label{sec:num}
Incorporation of these techniques into high-level software tools such
as Firedrake is the subject of ongoing investigation.  In the
meantime, we provide some basic examples written in Python, with
sparse matrix assemble and solvers using petsc4py~\citep{Dalcin:2011}.

\subsection{Scaling degrees of freedom}
Before considering the accuracy of the \( L^2 \)
projection, achieved via the global mass matrix,
we comment on the conditioning of the mass and other matrices
when both derivative
and point value degrees of freedom appear.  The Hermite element is
illustrative of the situation.

On a cell of typical diameter \( h \), consider a basis function
corresponding to the point value at a given vertex.  Since
the vertex basis function has a size of \( \mathcal{O}(1) \) on a triangle
of size \( \mathcal{O}(h^2) \), its \( L^2 \) norm should be \(
\mathcal{O}(h) \).  Now, consider a basis function corresponding to a
vertex derivative.  Its \emph{derivative} is now \( \mathcal{O}(1) \)
on the cell, so that the \( H^1 \) seminorm is \( \mathcal{O}(h) \).
Inverse inequalities suggest that the \( L^2 \) norm could then be as
large as \( \mathcal{O}(1) \).  That is, the different kinds of nodes
introduce multiple scales of basis function sizes under transformation, which
manifests in ill-conditioning.  Where one expects a mass matrix
to have an \( \mathcal{O}(1) \) condition number, one now obtains an
\( \mathcal{O}(h^{-2}) \) condition number.  This is observed even on
a unit square mesh, in Figure~\ref{fig:masscond}.  All condition numbers
are computed by converting the PETSc mass matrix to a dense matrix
and using LAPACK via scipy~\citep{scipy}

However, there is a simple solution.  For the Hermite element, one can
scale the derivative degrees of freedom locally by an ``effective \( h
\)''.  All cells sharing a given vertex must agree on that \( h \),
which could be the average cell diameter among cells sharing a
vertex.  Scaling the nodes/basis functions (which amounts to multiplying
\(V \) on the right by a diagonal matrix with 1's or \( h\)'s) removes
the scale separation among basis functions and leads again to an \(
\mathcal{O}(1) \) condition number for mass matrices, also seen in
Figure~\ref{fig:masscond}.  From here, we will assume that all degrees of freedom are
appropriately scaled to give \(\mathcal{O}(1) \) conditioning for the
mass matrix.

\begin{figure}
  \begin{center}
    \includegraphics[width=3.0in]{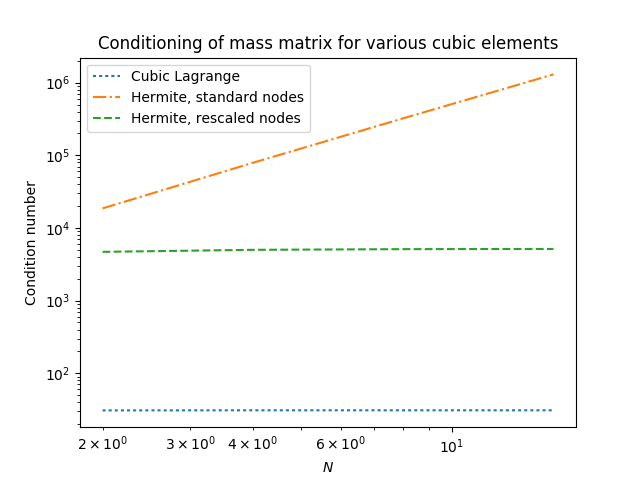}
  \end{center}
  \caption{Condition numbers for cubic Lagrange and Hermite mass
    matrices on an \( N \times N \) mesh divided into right triangles.
    This demonsrates an
    \( \mathcal{O}(h^{-2}) \) scaling when the ``original'' Hermite
    degrees of freedom are used, but \(
    \mathcal{O}(1) \) condition number when the 
    derivative degrees of freedom are scaled by \( h \).  
    Rescaling the Hermite nodes still gives a considerably larger
    condition number than for standard Lagrange elements.}
  \label{fig:masscond}
\end{figure}

\subsection{Accuracy of $L^2$ projection}
Now, we demonstrate that optimal-order accuracy is obtained by
performing \( L^2 \) projection of smooth functions into the Lagrange,
Hermite, Morley, Argyris, and Bell finite element spaces.  In each case we use an \(
N \times N \) mesh divided into right triangles.   Defining \( u(x, y)
= \sin(\pi x) \sin(2 \pi y) \) on \( [0,1]^2 \), we seek \( u_h \)
such that
\begin{equation}
  \left( u_h , v_h \right) = \left( u , v_h \right)
\end{equation}
for each \( v_h \in V_h \), where \( V_h \) is one of the the finite
element spaces.  Predicted asymptotic convergence rates -- third for Morley,
fourth for Hermite and Lagrange, fifth for Bell, and sixth for Argyris, are observed in Figure~\ref{fig:l2proj}.

Note that the Hermite and Lagrange elements have the same order of
approximation, but the Lagrange element delivers a slightly lower
error.  This is to be expected, as the space spanned by cubic Hermite
triangles is a proper subset of that spanned by Lagrange.

\begin{figure}
  \begin{center}
    \includegraphics[width=3.0in]{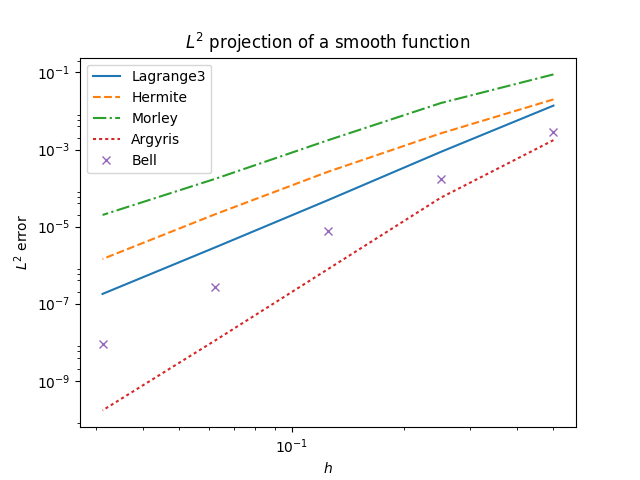}
  \end{center}
  \caption{Accuracy of \( L^2 \) projection using cubic Lagrnage,
    Hermite, Morley, Argyris, and Bell elements.  All approach theoretically optimal rates.}
  \label{fig:l2proj}
\end{figure}
  
\subsection{The Laplace operator}
As a simple second-order elliptic operator, we consider the Dirichlet
problem for the Laplace
operator on the unit square \( \Omega\):
\begin{equation}
  \label{eq:laplace}
  -\Delta u = f,
\end{equation}
equipped with homogeneous Dirichlet boundary conditions
\( u = 0 \) on \( \partial \Omega \).

We divide \( \Omega \) into an \( N \times N \) mesh of triangles and
let \( V_h \) be one of the Lagrange, Hermite, Argyris, or Bell finite
element spaces, all of which are \( H^1 \)-conforming,
over this mesh.  The Morley element is not a suitable \( H^1 \)
nonconforming element, so we do not use it here.
We then seek \(
u_h \in V_h \) such that
\begin{equation}
  \left( \nabla u_h , \nabla v_h \right)
  = \left( f , v_h \right)
\end{equation}
for all \( v_h \in V_h \).

Enforcing strong boundary conditions on elements with derivative
degrees of freedom is delicate in general.  However, with grid-aligned
boundaries, it is less difficult.  To force a function to be zero on a
given boundary segment, we simply require the vertex values and all
derivatives tangent to the edge vanish.  This amounts to setting the
\(x\)-derivatives on the top and bottom edges of the box and
\(y\)-derivative on the left and right for Hermite, Argyris, and Bell
elements.  Dirichlet conditions for Lagrange are enforced in the
standard way.

By the method of manufactured solutions, we select \(
f(x, y) = 8 \pi^2 \sin(2 \pi x) \sin(2 \pi y) \) so that
\( u(x,y) = \sin(2 \pi x) \sin(2 \pi y) \).  In Figure~\ref{fig:laplaceerror}, we show
the \( L^2 \) error in the computed solution for both element
families.  As the mesh is refined, both curves approach the expected
order of convergence -- fourth for Hermite and Lagrange, fifth for
Bell, and sixth for Argyris.  Again, the error for Lagrange is
slightly smaller than for Hermite, albeit with more global degrees of freedom.

\begin{figure}
  \begin{center}
    \includegraphics[width=3.0in]{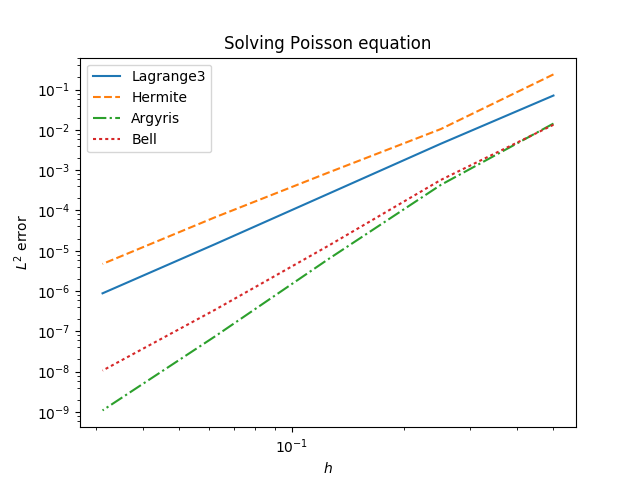}
  \end{center}
  \caption{Convergence study of various elements for
    second-order elliptic equation~\eqref{eq:laplace}.  As the mesh
    is refined, all elements approach their predicted optimal rates
    of convergence.}
  \label{fig:laplaceerror}
\end{figure}

\subsection{The clamped plate problem}
We now turn to a fourth-order problem for which the Argyris and Bell elements
provide conforming \( H^2 \) discretizations and Morley a suitable
nonconforming one. Following~\citep{BreSco}, we take the bilinear form
defined on \( H^2( \Omega ) \) to be
\begin{equation}
  a(u, v) = \int_{\Omega} \Delta u \Delta v
  - \left( 1 - \nu \right)
  \left( 2 u_{xx} v_{yy} + 2 u_{yy} v_{xx} - 4 u_{xy} v_{xy} \right)
  dx dy,
\end{equation}
where \( 0 < \nu < 1 \) yields a coercive bilinear form for any closed
subspace of \( H^2 \) that does not contain nontrivial linear polynomials.  We
fix \( \nu = 0.5 \).

Then, we consider the variational problem
\begin{equation}
  a(u, v) = F(v) = \int_\Omega f v \ dx,
  \label{eq:plate}
\end{equation}
posed over suitable subspaces of \( H^2 \).  It is known~\citep{BreSco} that
solutions of~\eqref{eq:plate} that lie in \(H^4(\Omega) \) satisfy the
biharmonic equation \( \Delta^2 u = f \) in an \( L^2 \) sense.

We consider the clamped plate problem, in which both the function
value and outward normal derivative are set to vanish, which removes
nontrivial linear polynomials from the space.  Again, we use
the method of manufactured solutions on the unit square to select \(
f(x,y) \) such that \( u(x,y) = \left( x(1-x) y(1-y) \right)^2 \),
which satifies clamped boundary conditions.  We solve this
problem with Argyris and Bell elements, and then also use the
nonconforming Morley element in the bilinear form.  Again, expected
orders of convergence are observed in Figure~\ref{fig:plateerr}.

\begin{figure}
  \begin{center}
    \includegraphics[width=3.0in]{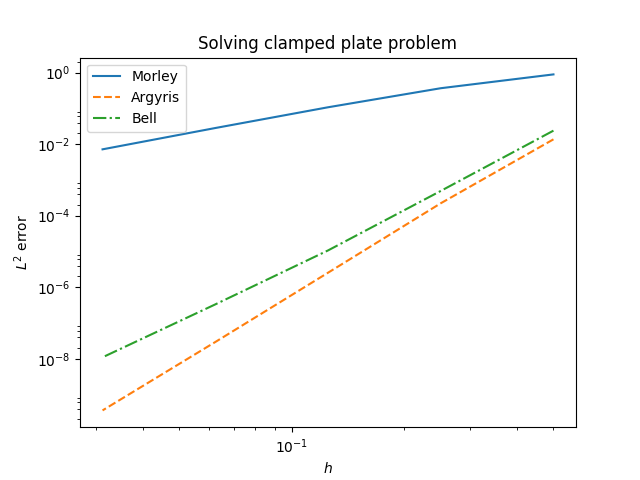}
  \end{center}
  \caption{Convergence study of Hermite and Argyris elements for
    clamped plate biharmonic problem~\eqref{eq:plate}.  As the mesh
    is refined, Bell and Argyris elements converge in \( L^2 \) at
    fifth and sixth order, respectively.  The nonconforming Morley
    element only converges at second order, which is known to be sharp.}
  \label{fig:plateerr}
\end{figure}

\section{Conclusions}
Many users have wondered why FEniCS, Firedrake, and most other high-level finite
element tools lack the full array of triangular elements, including
Argyris and Hermite.  One answer is that fundamental
mathematical aspects of mapping such elements have remained relatively
poorly understood.  This work demonstrates the challenges involved
with mapping such elements from a reference cell, but also proposes a 
general paradigm for overcoming those challenges by embedding the
nodes into a larger set that transforms more cleanly and using
interpolation techniques to relate the additional nodes back to
original ones.
In the future, we hope to incorporate these techniques
in FInAT (\url{https://github.com/FInAT/FInAT}), a successor project
to FIAT that produces abstract syntax for finite element evaluation
rather than flat tables of numerical values.  TSFC~\cite{tsfc} already
relies on FInAT to enable sum-factorization of
tensor-product bases.  If FInAT can provide rules for evaluating the
matrix \( M \) in terms of local geometry on a per-finite element basis,
then TSFC and other form compilers should be able to seamlessly (from the
end-users' perspective) generate code for many new kinds of finite
elements.

\bibliographystyle{plain}
\bibliography{bib}

\end{document}